\theoremstyle{definition}
\newtheorem{definition}{Definition}[section]
\newtheorem*{assumption*}{Assumption}
\newtheorem*{condition*}{Condition}
\theoremstyle{plain}
\newtheorem{theorem}[definition]{Theorem}
\newtheorem{lemma}[definition]{Lemma}
\theoremstyle{remark}
\patchcmd{\proof@init}{\@empty}{Proof\@ifnotempty{#1}{\ #1}}{}{}
\newcommand{\N}{ \mathbb{N} }
\newcommand{\R}{ \mathbb{R} }
\newcommand{\calB}{\mathcal{B}}
\newcommand{\calD}{\mathcal{D}}
\newcommand{\calE}{\mathcal{E}}
\newcommand{\calF}{\mathcal{F}}
\newcommand{\calG}{\mathcal{G}}
\newcommand{\calS}{\mathcal{S}}
\newcommand{\calZ}{\mathcal{Z}}
\newcommand{\matnull}{{0}}
\newcommand{\eins}{{\bm 1}}
\newcommand{\matA}{{\bm A}}
\newcommand{\matB}{{\bm B}}
\newcommand{\matC}{{\bm C}}
\newcommand{\matD}{{\bm D}}
\newcommand{\matX}{{\bm X}}
\newcommand{\matS}{{\bm S}}
\newcommand{\vecnull}{{\bm 0}}
\newcommand{\veca}{{\bm a}}
\newcommand{\vecb}{{\bm b}}
\newcommand{\vecB}{{\bm B}}
\newcommand{\vecc}{{\bm c}}
\newcommand{\vecd}{{\bm d}}
\newcommand{\vecg}{{\bm g}}
\newcommand{\vecm}{{\bm m}}
\newcommand{\vecp}{{\bm p}}
\newcommand{\vecu}{{\bm u}}
\newcommand{\vecS}{{\bm S}}
\newcommand{\vecv}{{\bm v}}
\newcommand{\vecx}{{\bm x}}
\newcommand{\vecX}{{\bm X}}
\newcommand{\vecy}{{\bm y}}
\newcommand{\vecZ}{{\bm Z}}
\newcommand{\vecz}{{\bm z}}
\newcommand{\vecU}{{\bm U}}
\newcommand{\bfmu}{\bm \mu}
\newcommand{\bftau}{\bm\tau}
\newcommand{\bfDelta}{\bm\Delta}
\newcommand{\bfSigma}{\bm\Sigma}
\newcommand{\vecop}{ \operatorname{vec }}
\newcommand{\EE}{\mathbb E}
\newcommand{\Cov}{{\mbox{Cov\,}}}
\newcommand{\matid}{I}
\newcommand{\id}{\operatorname{id}}
\title{Adaptive Thresholds for Monitoring and Screening in Imbalanced Samples: Optimality and Boosting Sensitivity} 
\author[$\dagger$]{Ansgar Steland}
\affil[$\dagger$]{\footnotesize RWTH Aachen University\\ Insitute of Statistics and AI Center}
\date{}
\begin{document}
	
	\maketitle

\begin{abstract}
Suppose (standardized) measurements or statistics are monitored to raise an alarm when a threshold is exceeded. Often, the underlying population is heterogenous with respect to important discrete variables and thus samples may consist of imbalanced classes. We propose to use thresholds which depend on such covariates to boost the sensitivity for rare classes, which otherwise tend to be ignored. Under mild conditions, we identify optimal threshold functions and develop a feasible procedure for their computation. Further, for the proportional rule a nonparametric estimator of the threshold function is proposed and a central limit theorem is shown, including the case that conditional mean and variance used for standardization are estimated. For feasible uncertainty quantification a bootstrap scheme is proposed. The approach is illustrated and evaluated by a real data analysis.  

\textbf{Keywords:} Adaptive inference \and empirical process \and  monitoring \and nonparametric estimation \and sequential analysis.
\end{abstract}

\section{Introduction}

A decision framework is considered where univariate observations (or summary statistics) of a sequential data stream are thresholded to accept or reject a null hypothesis against a change alternative hypothesis, the first $n$ points being observed and reserved as a learning sample.  \color{black} This setting hosts several classical statistical problems including screening of populations for diseases, \cite{BlackWelch1997}, monitoring a production process by a control chart, \cite{Montgomery2021ISQC}, and sequential detection of changes in parameter estimates, \cite{CsorgoHorvath1997} and \cite{Steland2007WDF}. \color{black} Often, however, there is additional (external) information given by a modifier variable, $Z$, about the framework or environment, which should  be taken into account when standardizing measurements and determining a suitable decision threshold. Among the diverse areas, where such information is available, are intense-care monitoring of patients, where the monitoring rule should be individualized according to the patient's state-of-health and the decision rule should be more sensitive for rare but critical states, and screening of (sub-) populations to identify cases which are likely to develop a disease, where variables such as the body mass index may have predictive power and special attention should be paid to cases with high risk factor values. The adaptation of monitoring thresholds by ad-hoc rules is common practice in vital sign monitoring of postsurgical patients and in general care units, \cite{WelchEtAl2016}. In a recent study, \cite{van_Rossum_2021} investigated and compared various approaches  to take account of personal and situational factors, mainly to avoid too many false alarms and balance them with the sensitivity to detect adverse events. For a discussion of further potential areas of applications see \cite{SteRafa2024}. 

We study threshold-based decision rules, as arising in hypothesis testing, designed as statistical tests and thus controlling the type I error rate of falsely rejecting the null hypothesis and declaring an alarm. However, contrary to the classical setting, the additional information (modifier) $Z$ is taken into account by modeling the decision threshold as a function of the modifier $ Z $. In this way $Z$ directly affects the decision and thus the probability to raise an alarm (i.e. reject the null hypothesis). We focus on the case of categorical $Z$ with a special emphasis on the imbalanced classes problem. Specifically,  we study a rule suggested by \cite{SteRafa2024}, the {\em proportional rule}, and a generalization thereof, the $ \gamma $-proportional rule, which distribute  the significance level over the sample space $ \calZ $ of $Z$, in such way that the statistical power is  larger for small (minority) classes than for large classes. This is motivated by the fact that often small classes  represent risky observations, for which it is more likely that future data  belong to the alternative hypothesis due to a change of the underlying distributions. 

Under mild conditions, the proportional rule and the related $ \gamma $-proportional rule  belong to the class of admissible threshold functions induced by a discrete subprobability measure on $ \calZ $. We establish sufficient conditions for threshold functions ensuring that the corresponding decision rule is optimal. Here optimality means that one aims at determining a level $\alpha $ rule with prespecified conditional type II error rates, which maximizes the detection power of a selected minority class. Of course, this requires to fix a distribution for the alternative for which the rule is suitably constructed. In applications, however, it might be difficult to specify all these parameters. 

Since the proportional rule depends on the distribution function, $ \Psi $, of standardized measurements, which was assumed to be known in \cite{SteRafa2024}, this paper tackles the  more involved problem that $ \Psi $ is unkown and thus needs to be estimated.  A nonparametric estimator based on nonparametric quantile estimates is proposed and its asymptotic distribution theory is established by viewing it as a differentiable statistical functional. The results are extended to the relevant case that the class-wise conditional means and standard deviations need to be estimated to standardize measurements. Here, suitable results for the residual empirical process are provided. Since the asymptotic distribution is intractable, we propose to use the bootstrap to assess the uncertainty of the estimated thresholds. For that purpose, a bootstrap central limit theorem is provided which leads to a feasible resampling scheme. 

The results are illustrated by analyzing a medical dataset about diabetes mellitus. The body mass index (BMI) is used as potential risk factor to define a class of high-risk patients for which the decision rule should be more sensitive than for the other patients. It turns out that the proposed rule improves upon the standard choice of a constant threshold for the standardized measurements. By a data driven simulation study, which mimics screening/monitoring of a large population by a rule estimated from a much smaller learning sample, we further assess the accuracy of the estimated thresholds in terms of the type I error rates.

The rest of the paper is organized as follows. Section~\ref{Sec: Method} provides the details of the framework and introduces the proposed decision rules. Optimality results are given in Section~\ref{Sec: Optimality}. The nonparametric estimator and its asymptotic distribution theory of the proportional rule is provided in Section~\ref{Sec:PropThreshold}. Extensions to the case that the standardization of $ U_t $ is based on estimators are given in Section~\ref{Sec:Residuals}. The bootstrap procedure is described in Section~\ref{Sec:Bootstrap}. Section~\ref{Sec: Example} provides a real data analysis including simulations to illustrate the method and assess its properties. Proofs and additional results are provided in an appendix, see Section~\ref{Sec: Proofs}.

\section{Methodology}
\label{Sec: Method}

We observe a potentially infinite sequence, $ (U_t,Z_t) $, $ t \ge 1 $, of pairs of
statistics $U_t $ and additional environment information $ Z_t $, \color{black} both attaining values in the real numbers and \color{black} defined on a common probability space. It is assumed that the first $n$ observations $(U_t, Z_t) $, $ 1 \le t \le n $,  represent a learning sample satisfying a no-change null hypothesis $ H_0 $ under which it forms a random sample. This is an extension of the monitoring framework addressed to \cite{ChuEtAl996}, but we take a different view on the problem and are specifically interested in designing procedures which take account of $ Z_t $ going beyond common standardization by a regression model approach and detecting structural instability. 
Usually, the $U_t$ are obtained by standardizing raw measurements $ X_t $ under $H_0$, but they may also be standardized summary statistics of samples drawn at each $t$. Despite this possible and useful extension, we name the $U_t $ observations in the sequel. Focusing on the one-sided case, these observations and future ones are analyzed and marked as suspicious, if $ U_t $ is too large in view of  $ H_0 $ and instead speaks in favor of an alternative hypothesis $ H_1 $  under which (without loss of generality) the upper tail probability of $ U_t $ is larger than under $ H_0 $, although this often may apply only for a subpopulation determined by certain values of $ Z_t $. Thresholded observations are regarded suspicious and potentially belonging to the subpopulation for which the distribution changes. Throughout, we assume that the first $n$ data points form a no-change learning sample and are available when setting up the procedure. 

In a monitoring setup further observations, $ X_t $, $ t  >  n$, are ordered in time and arrive one after the other. The first time point $t^* > n $ where an alarm is raised can then be used to estimate the change-point where the distribution  changes.  In monitoring, $ X_t $ may be a suitable change-point statistic of underlying raw measurements, $ \xi_t $, e.g., a scaled cumulated sum (CUSUM) $ X_t =  h^{-1/2} \sum_{j=0}^{h-1} \xi_{t-j} $, 
for some fixed window length $ h \in \N $. 
It is well known that such CUSUM statistics react quickly to level changes, and the proposed methodology allows for this setting as well. In a screening setup, which we also have in mind, the data for $ t > n $ represents a cross-sectional sample that is collected and analyzed. Here, one is interested in testing  each observation by marking it as suspicious or not, assuming that this may indicate that the conditional law given $ Z_t $ of the screening sample differs from the learning sample.

We will determine the threshold function $ c(\cdot) $ in such a way that a prespecified type I error rate, $ \alpha \in (0,1) $, of a false alarm is maintained for each observation. In the monitoring setup, the no-change average run length until a signal is then $ 1/\alpha $ under independence, of course. However, we focus on type I and type II error rates instead of average run lengths, which are more relevant characteristics for applications where, instead of quick detection, statistical guarantees matter that an alarm is significant in the classical sense of hypothesis testing. Moreover, we also focus on the construction of the rule and its estimation from the learning sample, leaving results about its sequential behaviour to future research.

In this work, the case of discrete-valued nominal $ Z_t$ taking values in a finite set $ \calZ = \{ z_1, \ldots, z_K \} $ for some $K \in \N $ is considered, such that the population is partitioned in $K$ classes.  \color{black} To keep the presentation simple, we introduce the approach for real-valued $ Z_t $ such that $ \calZ \subset \R $, but the theoretical results of Section~\ref{Sec:PropThreshold} and Section~\ref{Sec:Residuals} allow for the multivariate case where $\calZ \subset \R^q $ for some $ q \in \N $.  If $ Z_t $ is not a categorical variable, one can proceed by discretizing it. \color{black} Denote the class probabilities by $ p_k $, $ 1 \le k \le K $, and put $ \vecp  = (p_1,\ldots, p_K)^\top $.  These classes may or may not be closely related to the classes considered in a classification framework, where one assumes that observations from different classes have different distributions. Further, in classification the classes are defined in terms of a true variable (e.g. disease status), whereas in our setup $ Z_t $ is regarded as a predictive variable for the true status which is not available for construction of the decision rule. 
Typically, the classes defined by $ Z_t $ are imbalanced, which can lead to severe bias problems as small classes may have a negligible effect on statistical quantities such as the type I and type II error rates or misclassification rates. Clearly, the learning sample is partitioned as well, namely in subsamples corresponding to the values $ z_k $ observed in the learning sample, and unbalanced class probabilities give rise to imbalanced subsamples.  In this paper, we are specifically interested in rules tailored to such imbalanced settings.

We assume a regression model \color{black} for the raw observation \color{black} 
\[
X_t =\left\{ 
\begin{array}{cl}
	\mu(Z_t) + \sigma(Z_t) U_t,  &\qquad 1 \le t < t^\dagger, \\
	\mu^\dagger(Z_t) + \sigma^\dagger(Z_t) U_t, & \qquad t^\dagger \le t < T,
\end{array}
\right.
\]
for real-valued functions $ \mu(\cdot), \mu^\dagger(\cdot) $ and positive functions $ \sigma( \cdot), \sigma^\dagger(\cdot) $ with 
\[ 
	|\mu(\cdot) - \mu^\dagger(\cdot) |+|\sigma(\cdot) - \sigma^\dagger(\cdot) | \not= 0, \qquad  \text{on\ }  [t^\dagger,T], 
\] 
and i.i.d. random variables $ U_t \equiv \frac{X_t - \mu(Z_t)}{\sigma(Z_t)} $ distributed according to a c.d.f. $ \Psi $ not depending on $ \mu( \cdot) $ and $ \sigma( \cdot ) $, where $ U_t $ and $ Z_t $ are assumed to be independent. $T \in \N \cup \{ \infty \} $ is the time horizon. $ t^\dagger $ is the change-point where the distribution of $ X_t $ changes. Occasionally, we consider generic variables $ X, Z $ or $U$, whose distribution can be inferred from the context. 

The decision procedure is now as follows: Observation $ U_t $ is marked suspicious, if  
\[
	U_t>c(Z_t)
\]
for some threshold function $ c( \cdot ) $. \color{black} In a monitoring scenario, we aim at detecting all time points suspicious for a change and label all observations  $ U_t $ exceeding $ c(Z_t) $. An alarm is raised at the first time instant where this occurs, and usually this time point is regarded as an estimator of $ t^\dagger $. In a screening scenario, the screening may take place before or after the change point $t^\dagger $, and the goal is to mark all suspicious observations and analyze the corresponding classification result. \color{black}

We consider threshold functions which ensure that 
the false-alarm probability does not exceed a given significance level $ \alpha \in (0,1) $, i.e., the type I error rate constraint,
$$
p_f = P( U_1>c(Z_1)) \le \alpha,
$$
is satisfied. Thus, for each observation the decision is a statistical level $ \alpha $ test. 
In this general formulation, there are infinitely many admissible functions $ c(\cdot) $. The classical approach is as follows: For known $ \mu( \cdot ) $ and $ \sigma( \cdot ) $ put $ U_t = \frac{ X_t - {\mu}(Z_t)}{\sigma(Z_t) } $ and use the constant threshold $ c(z) = \Psi^{-1}(1-\alpha) $. In terms of the measurements $ X_t $, this leads to the threshold function $  t(z) = \mu(z) +  \sigma(z) \Psi^{-1}(1-\alpha)  $. If $ \mu(\cdot) $ and $ \sigma(\cdot) $ are unkown, one uses suitable estimators $ \hat \mu_n( \cdot) $ and $ \hat\sigma_n(\cdot) $ calculated from the learning sample. In this way, the information $ Z_t $ is used, but in terms of the standardized values $ U_t $ resp. $ \hat U_t $ the same threshold is used for all categories $ z \in \calZ$. Since 
\[p_f = \sum_{k=1}^K p_k P(U_1 > c(z_k)|Z_1=z_k),\] 
the imbalanced classes problem may arise that categories with small $p_k $ (minority classes) are more or less ignored by a rule $ c(\cdot) $, since their contribution to the type I error rate is small or even negligible, and the same applies to the probability to raise an alarm under alternative hypotheses. 

\subsection{Proposed decision rules}

To mitigate the issues arising for imbalanced probabilities, one may approach the problem by assigning smaller thresholds to classes with small class probabilities.  One can determine optimal thresholds as discussed in the next section, which requires to specify a suitable target alternative distribution and to set up conditional type II error rates for all categories. But in applications, this information is rarely available. Therefore, we study a threshold function which automatically assigns smaller thresholds to rare classes and larger ones to classes which are more frequently observed.

Note that the smaller $ c(z_k) $ for some possible value $ z_k $ of $ Z_t $, the higher the sensitivity of the rule for the alternative hypotheses $ H_1 $ that $ E(X_t|Z_t=z_k) $ exceeds the $ H_0 $-value $ \mu(z_k) $ within the class defined by $ Z_t = z_k $. This suggests to define $ c(z_k) $ as a monotone function of $ p_k = P(Z_t=z_k)$. An interesting rule of this type, introduced in \cite{SteRafa2024} and named proportional rule, is  defined as
\begin{equation}
	\label{ProportionalRule}
	c_{\text{prop}}(z_k) = \Psi^{-1}\left( \frac{(1-\alpha)p_k}{\sum_{j=1}^K p_j^2 } \right), \qquad 1 \le k \le K,
\end{equation}
or, more generally, for some fixed $ 0 < \gamma \le 1 $ the $ \gamma $-proportional rule
\begin{equation*}
	\label{GammaProportionalRule}
	c_{\text{prop},\gamma}(z_k) = \Psi^{-1}\left( \frac{(1-\alpha)p_k^\gamma}{\sum_{j=1}^K p_j^{\gamma+1} } \right), \qquad 1 \le k \le K,
\end{equation*}
provided the probability distribution $ \vecp = (p_1, \ldots, p_k)^\top \in[0,1]^K $ of $ Z_t $ is admissible in the sense that it ensures that the rule is defined, i.e., $ p_k^\gamma < (1-\alpha)^{-1} \sum_{j=1}^K p_j^{\gamma+1} $, for all $k$. Clearly, the smaller $ \gamma $, the smaller the range of the $ p_k^\gamma $, which leads to a larger set of admissible distributions $ \vecp $.
Note that for $ p_k = 1/K $, $ 1 \le k \le K $, the constant threshold $ \Psi^{-1}(1-\alpha) $ is assigned to each class.  Otherwise, classes with small $p_k$ receive a smaller threshold. By construction,
\[
p_f = \sum_{k=1}^K p_k [1-\Psi( c_{\text{prop},\gamma}(z_k ) )] = \alpha,
\]
i.e., the resulting detector maintains the nominal type I error rate of a false alarm.

One can easily check that for the binary case ($ K = 2 $) the proportional rule always exists, if $ \alpha \le \min(p_1,p_2) $, see \cite{SteRafa2024}, and for $K > 2 $ one may enlarge the scope of distributions by dichotomizing the weights, see appendix for details.

\section{Optimal threshold functions}
\label{Sec: Optimality}

The  rules $ c_{\text{prop}} $ and $ c_{\text{mod}} $ are special cases of the more general class of solutions due to \cite{SteRafa2024}, which attain the form
\begin{equation}
	\label{GeneralClass}
	c(z_k) = \Psi^{-1}( g_k/p_k) \eins_{\{p_k>0\}} + \Psi^{-1}(0) \eins_{\{p_k=0\}}, \qquad 1 \le k \le K,
\end{equation}
for a $ \vecp $-subprobability vector $ \vecg = (g_1, \ldots, g_K)^\top $, i.e., $ 0 \le g_k < p_k $, $ 1 \le k \le K $, satisfying the type-I error rate constraint $ \sum_{k=1}^K g_k \ge 1-\alpha $. Specifically, the proportional rule corresponds to the choice $ g_k = (1-\alpha) p_k^2 / \sum_{j=1}^K p_j^2 $, $ 1 \le k \le K $. 

Thus, we consider the problem to select the threshold function $c(\cdot) $ in such a way that the (conditional) detection power is at least $ 1- \beta $ (resp. $1-\beta_k$), for some prespecified marginal type II error rate $ \beta \in (0,1) $ and conditional type II error rates $ \beta_k $, respectively, when the observations follow a different model. For that purpose, we assume that under $ H_1 $, given $ Z_t = z $, the observations $ U_t $, $ t > t^* $, are distributed according to
\begin{equation}
	\label{H1Model}
	U_t  \sim \Delta(z) + \Sigma(z) U_t^0, \qquad U_t^0 \sim \Psi,
\end{equation}
for some measurable real-valued function $ \Delta( \cdot) $ and a measurable positive function $ \Sigma(\cdot) $ with $ \Delta \not\equiv 0 $ or $ \Sigma \not\equiv 1 $. $ t^* $ is called change-point. If $ t^* = 1 $, then we are given the classical hypothesis testing problem. Since the results of this section are point-wise, we confine ourselves to this case. Note that the null hypothesis $ H_0$ is given by $ H_0 : \Delta \equiv 0, \Sigma \equiv 1 $.

For simplicity of presentation and without loss of generality, the effect of $ Z_t $ is formulated in terms of the law of $ U_t $ instead of $ X_t $, and in agreement with the specification of the decision rule, we shall focus on upper one-sided deviations and therefore assume that $ \Delta(\cdot) > 0 $. To this end, note that if we specify the alternative by assuming that $ X_t $ given $ Z_t = z $ has conditional mean $ \tilde{\Delta}(z) $,  conditional variance $ \tilde{\Sigma}(z) $ and $ \frac{X_t - \tilde{\Delta}(z)}{\tilde\Sigma(z) } \sim \Psi $ (given $ Z_ t = z $) holds, then one readily checks that \eqref{H1Model} holds with $ \Delta(z) = \frac{\tilde{\Delta}(z)-\mu(z)}{\sigma(z)} $ and $ \Sigma(z) = \frac{\tilde{\Sigma}(z)}{\sigma(z)} $.

\subsection{Sufficient conditions for optimality}

To ensure the type I error rate constraint, $ \sum_k g_k \ge 1-\alpha $, the sub-probabilities $ g_k $ and thus the threshold values, $ c(z_k) $, need to be large enough. Contrary, to ensure a certain minimal detection power, the threshold function and thus the $ g_k$ need to be selected small enough. The following theorem collects sufficient conditions on a threshold function. 

\begin{theorem} 
	\label{ThOptimality}
	Suppose that model \eqref{H1Model} holds true. 
	\begin{itemize}
		\item[(i)] Any threshold function $ c(\cdot ) $ satisfying
		\begin{align}
			\label{H0Constraint}
			\sum_{k=1}^K \Psi( c(z_k) ) p_k &\le \alpha, \\ 
			\label{H1Constraint}
			c(z_k) \le c_{\text{opt}}^* (z_k) &= \Delta(z_k) + \Sigma(z_k) \Psi^{-1}( \beta ) , \qquad 1 \le k \le K,
		\end{align}
		is a level $ \alpha $ rule with detection power at least $ 1- \beta $. If 
		\begin{equation}
			\label{CondPower}
			c(z_k) \le \Delta(z_k) + \Sigma( z_k) \Psi^{-1}(\beta_k),
		\end{equation}
		the detector has conditional power at least $ 1- \beta_k $, $ 1 \le k \le K $, and then the constraint $ \sum_{k=1}^K \beta_k p_k \le \beta $  ensures a (marginal) power of at least $ 1-\beta$.  
		\item[(ii)] Suppose that the  threshold function $ c(\cdot) $ is selected  by 
		\[ c(z_k) = \Psi^{-1}(g_k/p_k) \bm 1_{\{p_k>0\}} + \Psi^{-1}(0) \bm 1_{\{p_k=0\}}, \qquad 1 \le k \le K, \] 
		for some nonnegative vector $ \vecg = (g_1, \ldots,g_K)^\top $. If  the $K+1$ inequalities
		\[
		0 \le g_k \le p_k \Psi( c_{\text{opt}}^*(z_k;\beta) ) , 1 \le k \le K, \ \sum_{k=1}^K g_k \ge 1-\alpha,
		\]
		where $ \ c_{\text{opt}}^*(z_k;\beta_k) = \Delta(z_k) + \Psi^{-1}(\beta) \Sigma(z_k)  $, $ 1 \le k \le K $,
		hold, then the detector operates on a significance level $ \alpha $, has detection power at least $ 1-\beta $ and associated average run length $ \frac{1}{1-\beta} $ under the alternative. If 
		\[
		g_k \le p_k \Psi( c_{\text{opt}}^*(z_k, \beta_k) ), 
		\]
		then the conditional power is at least $ 1 - \beta_k $, $ 1 \le k \le K $, and the constraint $ \sum_{k=1}^K p_k \beta_k \le \beta $ ensures a marginal power of at least $ 1 - \beta $.
	\end{itemize}
\end{theorem}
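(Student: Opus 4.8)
The plan is to decompose every probability of interest over the $K$ classes and to reduce the statement to elementary one-dimensional facts about the c.d.f.\ $\Psi$. The basic observation is that under \eqref{H1Model} with change-point $t^*=1$, conditionally on $Z_1=z_k$ the variable $U_1$ has the law of $\Delta(z_k)+\Sigma(z_k)U_1^0$ with $U_1^0\sim\Psi$ and $\Sigma(z_k)>0$, so that
\[
	P\bigl(U_1>c(z_k)\mid Z_1=z_k\bigr)=1-\Psi\!\left(\frac{c(z_k)-\Delta(z_k)}{\Sigma(z_k)}\right),
\]
and under $H_0$ (that is, $\Delta\equiv0$, $\Sigma\equiv1$) this reduces to $1-\Psi(c(z_k))$. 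Everything else is averaging these class-wise probabilities against $p_k$.

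For part (i), summing the $H_0$ version against $p_k$ gives the false-alarm probability $p_f=\sum_{k=1}^K p_k\bigl(1-\Psi(c(z_k))\bigr)$, so the type I error constraint \eqref{H0Constraint}, equivalently $\sum_k p_k\Psi(c(z_k))\ge1-\alpha$, is exactly $p_f\le\alpha$, which proves the level statement. For the power, the conditional detection probability in class $k$ is $1-\Psi\bigl((c(z_k)-\Delta(z_k))/\Sigma(z_k)\bigr)$, which is at least $1-\beta_k$ precisely when $\Psi\bigl((c(z_k)-\Delta(z_k))/\Sigma(z_k)\bigr)\le\beta_k$; by monotonicity of $\Psi$ and of its generalized inverse this is implied by \eqref{CondPower}, and specializing to $\beta_k\equiv\beta$ gives \eqref{H1Constraint} together with conditional power at least $1-\beta$ in each class. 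Averaging, the marginal power is $\sum_k p_k\bigl(1-\Psi(\cdot)\bigr)\ge\sum_k p_k(1-\beta_k)=1-\sum_k p_k\beta_k\ge1-\beta$ under the constraint $\sum_k p_k\beta_k\le\beta$.

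Part (ii) is then obtained by plugging the specific form $c(z_k)=\Psi^{-1}(g_k/p_k)\bm 1_{\{p_k>0\}}+\Psi^{-1}(0)\bm 1_{\{p_k=0\}}$ into part (i). For $p_k>0$ one has $p_k\Psi(c(z_k))=g_k$, while for $p_k=0$ the constraint $0\le g_k\le p_k\Psi(c_{\text{opt}}^*(z_k;\beta_k))$ forces $g_k=0$ and the class contributes nothing, so $\sum_k p_k\Psi(c(z_k))=\sum_k g_k\ge1-\alpha$, which is \eqref{H0Constraint}. Likewise $g_k/p_k\le\Psi(c_{\text{opt}}^*(z_k;\beta_k))$ and monotonicity of $\Psi^{-1}$ yield $c(z_k)=\Psi^{-1}(g_k/p_k)\le c_{\text{opt}}^*(z_k;\beta_k)=\Delta(z_k)+\Sigma(z_k)\Psi^{-1}(\beta_k)$, i.e.\ \eqref{CondPower}. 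Part (i) then delivers significance level $\alpha$, conditional power at least $1-\beta_k$, and, under $\sum_k p_k\beta_k\le\beta$, marginal power at least $1-\beta$; finally, since under the alternative the successive exceedance events are i.i.d.\ with success probability at least $1-\beta$, the detection time is geometric, which gives the average run length $\tfrac{1}{1-\beta}$.

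The computational content is routine; the only step requiring care is the passage between $c(z_k)\le\Delta(z_k)+\Sigma(z_k)\Psi^{-1}(\beta_k)$ and $\Psi\bigl((c(z_k)-\Delta(z_k))/\Sigma(z_k)\bigr)\le\beta_k$, and, in part (ii), between $g_k/p_k\le\Psi(x)$ and $\Psi^{-1}(g_k/p_k)\le x$. These equivalences are immediate when $\Psi$ is continuous and strictly increasing, and otherwise one must invoke the standard one-sided inequalities relating a c.d.f.\ to its generalized inverse and argue on the correct side of any atoms of $\Psi$. I expect this, together with the $p_k=0$ bookkeeping, to be the only mildly delicate point.
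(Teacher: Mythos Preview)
Your proposal is correct and follows essentially the same route as the paper: decompose the false-alarm and detection probabilities over the $K$ classes, express them via $\Psi$ applied to $(c(z_k)-\Delta(z_k))/\Sigma(z_k)$, and use monotonicity of $\Psi$ and $\Psi^{-1}$ to translate the threshold inequalities into the error-rate constraints; part (ii) is then obtained exactly as you do, by plugging in $c(z_k)=\Psi^{-1}(g_k/p_k)$ and verifying the hypotheses of part (i). Your extra care about the $p_k=0$ bookkeeping and the generalized-inverse inequalities is appropriate, and you rightly read \eqref{H0Constraint} as $\sum_k p_k\Psi(c(z_k))\ge 1-\alpha$ (the paper's displayed inequality is a typo).
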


The result can be used to check whether a rule of interest has certain power under a given alternative.  However, note that, in general, the sufficient condition obtained in statement (ii) formulates a joint condition on the threshold function and the alternative parameterized by $ \Delta(\cdot) $ and $ \Sigma(\cdot) $. Specifically, for small deviations from the null hypothesis, there may be no solution of the set of sufficient inequalities: If the values $ c_{\text{opt}}^*(z_k) = \Delta(z_k) + \Sigma(z_k) \Psi^{-1}(\beta) $, $ 1 \le k \le K $, are positive but very small, then any selection of $ g_1, \ldots, g_K $ with $ 0 \le g_k < p_k $ and $ g_k \le p_k \Psi( c_{\text{opt}}^*(z_k) ) $,  $ 1 \le k \le K $,  may be too small to ensure the type I error rate constraint, which requires that the sum of the $ g_k $ is large enough to ensure $ \sum_{k=1}^K g_k \ge 1-\alpha $.

\subsection{Computing optimal solutions}

If the statistician is able to specify, in addition to $ \alpha $, all required conditional type II error rates, $  \beta_k $, or at least the required marginal type II error rate, $ \beta $, and has sufficient a priori knowledge to explicitly fix an alternative hypothesis in terms of $ \Delta( \cdot)  $ and $ \Sigma(\cdot) $, then the question arises how one can efficiently solve these inequalities. Admissible solutions can be found as follows: Determine (algorithmically) a level $ \alpha $ detector with conditional power $1-\beta_k $ given $z_k$ by finding an appropriate vector $\vecg $ satisfying the set of the inequalties 
\[
0 \le g_k \le p_k \Psi(\Delta(z_k) + \Sigma(z_k) \Psi^{-1}(\beta_k) ), 1 \le k \le K, \qquad \sum_{k=1}^K g_k \ge 1-\alpha. 
\]
These inequalities can be written in matrix form,
\begin{equation}
	\label{IneqMatrix}
	\matA \vecg \le \vecb, 
\end{equation}
if we define
\[
\matA = \begin{pmatrix} -\bm 1  \\ \mathbb{I}_K \end{pmatrix}, \qquad \vecb = \begin{pmatrix} \alpha-1 \\ \vecp  \# \Psi(\vecc_{\text{opt}}^*) \end{pmatrix},
\]
where $ \bm 1 = (1, \ldots, 1)^\top \in \R^K $, 
\[ \Psi(\vecc_{\text{opt}}^*) = ( \Psi(c_{\text{opt}}^*(z_1,\beta_1)), \ldots, \Psi(c_{\text{opt}}^*(z_K,\beta_K)))^\top \] 
and $ \veca \# \vecb = (a_i b_i)_{i=1}^n $, for $n$-vectors $ \veca = (a_1,\ldots, a_n)^\top $ and $ \vecb = (b_1, \ldots, b_n)^\top $, denotes pointwise multiplication. When aiming at a rule ensuring a marginal detection power of at least $ 1- \beta $, the $ c_{\text{opt}}^*(z_k, \beta_k) $ are replaced by $ c_{\text{opt}}^*(z_k,\beta )$.

Nonnegative solutions $ \vecg $ of \eqref{IneqMatrix} can be obtained by the $b$-rule algorithm, see \cite{AvisBohdan2004}, or by formulating the problem  as a linear optimization problem with inequality constraints and coefficients of the linear objective function set to zero. In general, there is no unique solution if the set of admissible solutions is nonempty. 

One can proceed and compute the optimal solution maximizing the conditional detection power of the smallest minority class. Thus, with $ k^* \in \{1, \ldots, K \} $ such that $ p_{k^*} = \min_{1 \le k \le K} p_k $, one solves the linear optimization problem under inequality constraints
\[
\min \vecd^\top \vecg \qquad \text{under the constraints $ \matA \vecg \le \vecb $.}
\]
where $ \vecd = (d_1, \ldots, d_K)^\top $ with $ d_{k^*} = 1 $ and $ d_i = 0 $ for $ i \not = k^* $.

\section{Nonparametric estimation and differentiability}
\label{Sec:PropThreshold}

Generalizing the setup discussed so far, consider the case of $q-$variate environmental information $ \vecZ_t $, $ q \in \N $, i.e., $ \calZ \subset \R^q $.

In applications, the distributions of $ U_t $ and $ \vecZ_t $ are often unknown, and thus any threshold function of class \eqref{GeneralClass} needs to be estimated, especially, if it depends on the law of $ Z_t $ as well. The case of a known c.d.f $ \Psi $ of $ U_t $ can be treated by replacing the unknown probabilities, $ p_k $, by their corresponding relative frequencies and has been studied in \cite{SteRafa2024}. Here, we consider the more involved case that $ \Psi $ is unknown as well. 

For simplicity of presentation, we focus on the  proportional rule $ c_{\text{prop}}( \cdot ) $ noting that a similar result can be obtained with minor changes for the $ \gamma $-proportional rule and the modified rule. A natural ansatz for estimation of $  c_{\text{prop}}( \cdot )  $, which keeps the structural form of the rule, is to replace the $p_k $ by relative frequencies again, and to nonparametrically estimate the c.d.f $ \Psi $ of the $ U_t $'s, $ t < t^*$,  by the empirical c.d.f (e.c.d.f) of the sample $ U_1, \ldots, U_n $. Let $ \hat{\vecp}_n  $ be the $K$-vector of relative frequencies, $ \hat p_k = \frac1n \sum_{t=1}^n \eins_{\{\vecZ_t=\vecz_k\}} $, of observing $ \vecz_k$ in the sample, $ 1 \le k \le K $. Define the estimator $ \hat{c}_{\text{prop}}( \cdot ) $ as the random map from the support $ \calZ $ of the $ \vecZ_t $'s to $ \R $ defined 
\[
\hat{c}_{\text{prop}}(z_k) = \hat \Psi_n^{-1}\left( \frac{1-\alpha}{\sum_{j=1}^K \hat p_j^2} \hat p_k  \right), \qquad 1 \le k \le K,
\]
where $  \hat \Psi_n^{-1}( \cdot ) $  is the sample quantile function (i.e., the left-continuous generalized inverse) of the marginal empirical distribution function $ \hat F_{U,n}(u) = \lim_{\vecz \to \bm\infty }\hat F_n(u, \vecz) $ associated to the e.c.d.f. 
\[
\hat F_n(u,\vecz) = \frac{1}{n} \sum_{t=1}^n \eins_{\{U_t \le u, \vecZ_t \le \vecz \}}, \qquad u \in \R, \vecz \in \R^q.
\]
If the $ U_t $ depend on $h$ past measurements as the CUSUM statistic discussed Section~\ref{Sec: Method}, one can consider  the e.c.d.f of  each $h$th pair and correct $n$ in all formulas of this section accordingly. For brevity and clarity of presentation, we consider the case of standardized raw measurements.

Since the asymptotic results of this section depend on the employed quantile estimator only through the weak limit of the empirical process, $ \sqrt{n} (\hat F_n(\cdot) -  F(\cdot) ) $, cf. Assumption (A4) below, one may also use other quantile estimators including smoothed estimators. For example, the quantile estimator based on the Bernstein-Durmeyer smoothing operator as proposed by and studied in \cite{PepRafSte2014} can be used, which satisfies (A4) under mild conditions.

\subsection{Preliminaries and assumptions} 

Denote for a function $ G $ on $ \R^q $ the (linear) difference operator\footnote{defined as $ \sum\limits_{\varepsilon_1, \ldots, \varepsilon_q \in \{0,1\}} (-1)^{q-\sum_{j=1}^q \varepsilon_j} G( b_1^{\varepsilon_1}a_1^{1-\varepsilon_1}, \ldots, b_q^{\varepsilon_q} a_q^{1-\varepsilon_q} )$}  $ D_{\veca}^{\vecb} G $ for $ \veca, \vecb \in \R^q $ with $ \veca \le \vecb $, such that $ D_{\veca}^{\vecb} G = \int_{(\veca,\vecb]} dG $ if $G$ is a c.d.f.. Weak convergence of random elements is understood in the sense of \cite{VaartWellner2023}, i.e. in the space $ (l^\infty, \| \cdot \|_\infty) $ of bounded functions,  and denoted by  $ \Rightarrow $.

We impose the following conditions.

\noindent
\textbf{Assumption (A1):} $ (U_t,\vecZ_t) $, $ t \ge 1 $, are independent and identically distributed with c.d.f. $ F $.

\noindent
\textbf{Assumption (A2):} The marginal c.d.f. $ \Psi = F_U $ is continuously differentiable on $ \text{supp}{(d\Psi)}^o $ with positive density and has no jumps at the boundary $ \partial \text{supp}(d\Psi)  $. 

\noindent
\textbf{Assumption (A3):} $ \vecp $ is given by $ p_k = D_{\veca_k}^{\vecb_k} F_\vecZ $, $ 1 \le k \le K $, for constants $ \veca_k < \vecb_k $, $ 1 \le k \le K $, and some continuous c.d.f. $ F_\vecZ $ on $ \R^q $. Further, $ \vecp $ is admissible in the sense that $ 0 < p_k < \frac{\| \vecp \|_2^2}{1-\alpha} $, $ 1 \le k \le K $.

\noindent
\textbf{Assumption (A4):} The empirical process associated to the sample $ \{ (U_t, \vecZ_t)  : 1 \le t \le n \} $ and the estimator $ \hat F_n $ satisfies
\[
\sqrt{n}( \hat F_n(\cdot) - F(\cdot ) ) \Rightarrow \calB^0_F(  \cdot ) 
\]
as $  n \to \infty $, for some $ F $-Brownian bridge  $ \calB^0_F $.

Assumption (A2) is mild for non-discrete measurements, $ X_t $, and is standard for results on the quantile process. It also allows us to use the calculus of Hadamard differentiability.  The first condition of Assumption (A3) holds for a latent variable approach where an unobservable (typically continuous) latent random vector $ \tilde \vecZ $ is assumed, and instead of $ \tilde \vecZ $ one observes the coarser information $ \vecz_k $ if $ \tilde\vecZ_t \in (\veca_k, \vecb_k] $. The second condition is needed to ensure that $ c_{\text{prop}}^*( \cdot ) $ is defined. 

Assumption (A4) holds for the  e.c.d.f., on which we confince ourselves in the sequel, assuming the observations, $ X_t$, are standardized with the true conditional mean and standard deviation. Extensions will be discussed later. 

To this end, recall that $ \calB^0(\vecx) $, $ \vecx \in \R^N $, is called a $F$-Brownian bridge process associated to a c.d.f. $ F$ defined on $ \R^N $, if $ \{ \calB^0( \vecx )  : \vecx \in \R^N \} $ is a Gaussian process with mean zero and covariance function
\[
\Cov( \calB^0(\vecx), \calB^0(\vecy) ) = F( \vecx \wedge \vecy ) - F(\vecx)F(\vecy), \qquad \vecx, \vecy \in \R^N.
\]
Here, $ \vecx \wedge \vecy = ( x_1 \wedge y_1, \ldots, x_N \wedge y_N )^\top $ with $ x \wedge y = \min(x,y) $ for $ x, y \in \R $. If $ F $ is the uniform distribution on the $N$-dimensional unit cube, we obtain the standard Brownian bridge, $ \calB_{st}^0 $, on $ [0,1]^N $ with covariance function $ \Cov( \calB^0_{st}(\vecu), \calB^0_{st}(\vecv) )= \prod_{i=1}^N (u_i \wedge v_i - u_i v_i) $, for $ \vecu = (u_1, \ldots, u_N)^\top, \vecv = (v_1, \ldots, v_N)^\top  \in \R^N $. A straightforward calculation shows that the covariance function of the empirical process $ \sqrt{n}( \hat F_n(\cdot) - F(\cdot ) )  $ coincides with the covariance of an $ F $-Brownian bridge.

Partition $ \vecx^\top = (\vecx_1^\top, \vecx_2^\top) $ with $ \vecx_1 \in \R^{N_1} $ and $ \vecx_2 \in \R^{N_2} $, $ N_1 + N_2 = N $.   Then the marginal processes with respect to $ \vecx_1 $ and $ \vecx_2 $, respectively, obtained by letting the other coordinates tend to infinity, are Brownian bridges as well. Specifically,
\[
\calB^0_1(\vecx) = \calB^0( \vecx_1, \infty \eins_{N_2}), \qquad \vecx \in \R^{N_1},
\]
is a $ F_1 $-Brownian bridge for the marginal c.d.f. $ F_1(\vecx) = F(\vecx, \infty \eins_{N-N_1} ) $, 
and
\[
\calB^0_2(\vecx) = \calB^0( \infty \eins_{N_1}, \vecx ), \qquad \vecx \in \R^{N_2},
\]
is a $F_2$-Brownian for the marginal c.d.f. $ F_2(\vecx) = F(\infty \eins_{N_1}, \vecx) $. 

%

\subsection{Differentiability and central limit theorem}

We approach the problem to derive a central limit theorem (CLT) for the proposed estimator $ \hat{c}_{n}( \cdot ) $ by representing it as a functional evaluated at the empirical measure $ \hat F_n $ of the sample and show that the functional is Hadamard  differentiable. The appropriate domain of distribution functions is
\[
\calF = \{ F : \text{$F$ is a c.d.f. such (A2) and (A3) hold true} \}.
\]
The elements of $ \calF $ serve as distribution functions for the pairs $(U,\vecZ) $ and $ (X,\vecZ) $, respectively, and we shall frequently denote them by $ F_{(U,\vecZ)} $ or $ F_{(X,\vecZ)} $ to clarify their role. The marginals with respect to $X$ and $U$ are denoted by $ F_X $ and $ F_U $, respectively, with the understanding that if $ F = F_{(X,\vecZ)} \in \calF $ specifies the c.d.f. of $ ( X, \vecZ) $, the notation $ F_U = \Psi $ stands for the c.d.f. of the standardized measurement $ U = (X-\mu(\vecZ))/ \sigma(\vecZ) $. Further, since by Assumption (A3) the law of $ \vecZ $ is determined by the continuous c.d.f. $ F_\vecZ $, i.e., the c.d.f. of the latent variable in a latent variable model, with some abuse of standard notation we identify the c.d.f. of $ \vecZ $ with that c.d.f..

The proposed estimator is induced by the real-valued functional defined by 
\[
T(F) = (T_1(F), \ldots, T_K(F))^\top , \quad T_k( F ) = F_U^{-1}\left( \frac{c_\alpha  D_{\veca_k}^{\vecb_k}F_\vecZ }{\sum_j (D_{\veca_j}^{\vecb_j} F_\vecZ)^2 } \right), \qquad F \in \calF,
\]
where $ D_{\veca_k}^{\vecb_k} F_\vecZ = dF_\vecZ(\veca_k, \vecb_k] \in (0,1) $, $ 1 \le k \le K $,   $ c_\alpha = 1-\alpha $, and $\veca_k, \vecb_k $ are the constants from Assumption (A3). Then
\[
	\hat{c}_{\text{prop}}(\vecz_k)  = T_k( \hat F_n ), \qquad 1 \le k \le K,
\]
and $ c_{\text{prop}}( \cdot) $ and $ \hat{c}_{\text{prop}}(\cdot) $ can be identified with $ T(F)  $ and $ T(\hat{F}_n) $, respectively.

Recall the definition of (directional) Hadamard differentiability, \cite{VaartWellner2023}: Let $ \calD, \calE $ be metrizable, topological vector spaces. A map $ \phi : \calF_\phi \to \calE $ from a domain $ \calF_\phi \subset \calD $ to $ \calE $ is called Hadamard differentiable at $ \theta \in \calF_\phi $ tangentially to $ \calD_{\calF,0} \subset \calD $, if there exists a continuous linear map $ \phi'_\theta : \calD_{\calF,0} \to \calE $ such that for all converging sequences $ 0 < t_n \to 0 $ and sequences $ \{ \Delta_n : n \ge 1 \} \subset \calD $ with $  \Delta_n \to \Delta $, $n \to \infty $, for some $ \Delta \in \calD_{\calF,0} $, and $ \theta + t_n \Delta_n \in \calF_\phi $ for all $n$,
\[
	\frac{\phi(\theta + t_n \Delta_n) - \phi(\theta)}{t_n}  \to \phi'_\theta( \Delta ), \qquad n \to \infty.
\]
The domain $ \calF_\phi \subset \calD $ of the map may be an arbitrary set, and it suffices that the derivative is defined on $ \calD_{\calF,0} $ only. Further, it suffices that $ \theta $ and all $ \Delta_n $, $ n \ge 1 $, are such that addition and multiplication are continuous operations; then it suffices that $ \calD_{\calF,0}  $ is a topological vector space. For example, in Skorohod spaces $ +$ and $ \cdot $ are continuous operations, if at least one operand is a continuous function. The latter is guaranteed if $ \calF_\phi $ is a set of continuous functions. 

The chain rule of Hadamard differentiability states that in the situation $  \calF_\phi \stackrel{\phi}{\to}  \calE \stackrel{\psi}{\to} \calG $ for maps $ \phi $ and $ \psi $, such that $ \phi $ is Hadamard differentiable at each $ \theta \in \calF_\phi $ tangentially to $ \calD_{\calF,0} $ and $ \psi $ is Hadamard differentiable tangentially to $ \phi'_\theta( \calD_{\calF,0} ) $,
the composition $ \psi \circ \phi : \calF_\phi \to \calG $ is Hadamard differentiable at $ \theta \in \calF_\phi $ tangentially to $ \calD_{\calF,0} $ with derivative $ (\psi \circ \phi)'_\theta : \calD_{\calF,0} \to \calG $ at $ \theta \in \calF_\phi $ given by $ (\psi \circ \phi)'_\theta(\Delta) = \psi'_{\phi(\theta)}( \phi'_\theta( \Delta ) )$, $ \Delta \in \calD_{\calF,0} $.

A slightly stronger notion is continuous Fr\'echet differentiability, \cite{Shao1993}: A map $ \phi : \calF_\phi \to \calE $ is called continuously Fr\'echet differentiable at $ \theta \in \calF_{\phi} $, if there exists some continuous and linear map $ T'_\theta : \calD \to \calE $, such that for all sequences $ \{ \theta_n \}, \{ \psi_n \} \subset \calF_\phi $ with $ \theta_n \to \theta $ and $ \psi_n \to \theta $, $ n \to \infty $, it holds
\begin{equation}
\label{FrechetDiff}
	\frac{T(\psi_n) - T(\theta_n) - T'_\theta( \psi_n - \theta_n )}{\rho(\psi_n, \theta_n ) } \to 0, \qquad n \to \infty.
\end{equation}
Here, $ \rho $ denotes the metric of $ \calF_\phi $.  One can replace $ \calD $ by the vector space $ \calD_{\calF}^{vec} $ (of linear combinations) induced by $ \calF_\phi $. 
$ \phi $ is  named continuously Fr\'echet differentiable at $ \theta \in \calF_{\phi} $ tangentially to $ \calD_{\calF,0} $, if there exists a linear map $ T'_\theta $ on $ \calD_{\calF}^{vec} $ (or even $ \calD$) which is continuous for all convergent sequences $ \{ \Delta_n \} \subset  \calD_{\calF}^{vec} $ (resp. $ \subset \calD$) with limit $ \Delta \in \calD_{\calF,0} $, and for all such sequences and all sequences $ \{ \theta_n \} \subset \calF_\phi $ with $ \theta_n \to \theta $ and  $ 0 < t_n \to 0 $  statement \eqref{FrechetDiff} holds with $ \psi_n = \theta_n + t_n \Delta_n $. 

\begin{lemma}
	\label{propHadamard}
	Let $ \rho $ be a translation invariant and absolutely homogenous metric. If both $ \calF $ and $ \calD $ are equipped with $ \rho $, then continuous Fr\'echet differentiability (tangentially to $ \calD_{\calF,0} $) implies Hadamard differentiability (tangentially to $ \calD_{\calF,0} $). 
\end{lemma}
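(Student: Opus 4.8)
The plan is to show that the continuous Fr\'echet derivative of $ \phi $ at $ \theta $, call it $ \phi'_\theta $, \emph{is} a Hadamard derivative once restricted to the tangent set $ \calD_{\calF,0} $; no new derivative map needs to be produced, only the mode of convergence upgraded. The first step is bookkeeping: $ \phi'_\theta $ is linear on $ \calD_{\calF}^{vec} $ (and, in the version we use, on $ \calD $) by assumption, and the \emph{tangential} Fr\'echet condition already records that $ \phi'_\theta $ is sequentially continuous along sequences with limit in $ \calD_{\calF,0} $; since $ \calD $ and $ \calE $ (hence $ \calD_{\calF,0} $) are metrizable, this makes $ \phi'_\theta|_{\calD_{\calF,0}}\colon \calD_{\calF,0}\to\calE $ a continuous linear map, i.e. an admissible candidate for a Hadamard derivative. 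It then remains to verify the defining limit.

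Next I would take arbitrary $ 0<t_n\to 0 $ and $ \{\Delta_n\}\subset\calD $ with $ \Delta_n\to\Delta $ for some $ \Delta\in\calD_{\calF,0} $ and $ \theta+t_n\Delta_n\in\calF_\phi $ for all $ n $, and apply \eqref{FrechetDiff} to the \emph{constant} sequence $ \theta_n\equiv\theta\in\calF_\phi $ and $ \psi_n:=\theta+t_n\Delta_n $ (note $ t_n\Delta_n=\psi_n-\theta\in\calD_{\calF}^{vec} $, so the $ \Delta_n $ lie in the domain of $ \phi'_\theta $, and $ \Delta_n\to\Delta\in\calD_{\calF,0} $ is a legitimate tangential limit). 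Here the hypotheses on $ \rho $ enter, and this is exactly the point where it matters that $ \calF $ and $ \calD $ carry the \emph{same} metric: translation invariance and absolute homogeneity give $ \rho(\psi_n,\theta)=\rho(t_n\Delta_n,0)=t_n\,\rho(\Delta_n,0) $ (using $ t_n>0 $), while continuity of $ \rho $ gives $ \rho(\Delta_n,0)\to\rho(\Delta,0)<\infty $. Writing $ R_n:=\phi(\theta+t_n\Delta_n)-\phi(\theta)-t_n\,\phi'_\theta(\Delta_n) $ and using linearity of $ \phi'_\theta $, statement \eqref{FrechetDiff} reads $ R_n/\big(t_n\,\rho(\Delta_n,0)\big)\to 0 $ in $ \calE $ along the indices with $ \psi_n\neq\theta $ (for the complementary indices one has $ \Delta_n=0 $, hence $ R_n=0 $, and $ R_n/t_n $ vanishes identically). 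Multiplying by the bounded scalar sequence $ \rho(\Delta_n,0) $ and using joint continuity of scalar multiplication in the topological vector space $ \calE $ yields
\[
\frac{\phi(\theta+t_n\Delta_n)-\phi(\theta)}{t_n}-\phi'_\theta(\Delta_n)\;=\;\frac{R_n}{t_n}\;\longrightarrow\;0\qquad\text{in }\calE .
\]

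To finish, I would add the term $ \phi'_\theta(\Delta_n)\to\phi'_\theta(\Delta) $, valid by the tangential continuity of $ \phi'_\theta $ noted in the first step, and conclude, by continuity of addition in $ \calE $,
\[
\frac{\phi(\theta+t_n\Delta_n)-\phi(\theta)}{t_n}\;\longrightarrow\;\phi'_\theta(\Delta)\qquad\text{in }\calE ,
\]
which is precisely Hadamard differentiability at $ \theta $ tangentially to $ \calD_{\calF,0} $, with derivative $ \phi'_\theta|_{\calD_{\calF,0}} $. The argument is short and largely mechanical; the one place demanding care is the split of $ \N $ into $ \{n:\psi_n=\theta\} $ and $ \{n:\psi_n\neq\theta\} $, forced because the Fr\'echet quotient is a $ 0/0 $ expression on the former — one handles this by checking that both sides of the displayed identity vanish there and invoking that a sequence converges iff it converges along each block of a finite partition of its index set. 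A secondary point is to insist on the \emph{tangential} form of \eqref{FrechetDiff}, since that is exactly what both furnishes a linear $ \phi'_\theta $ already defined at the (possibly non‑tangential) perturbations $ \Delta_n $ and guarantees its continuity at the tangential limit $ \Delta $.
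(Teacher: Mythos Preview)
Your proof is correct and follows essentially the same route as the paper: take $\theta_n\equiv\theta$, $\psi_n=\theta+t_n\Delta_n$, use translation invariance and absolute homogeneity of $\rho$ to rewrite $\rho(\psi_n,\theta)=t_n\,\rho(\Delta_n,0)$, then convert the Fr\'echet remainder estimate into the Hadamard limit by multiplying through by the bounded factor $\rho(\Delta_n,0)$ and adding $\phi'_\theta(\Delta_n)\to\phi'_\theta(\Delta)$. If anything, you are slightly more careful than the paper in explicitly handling the degenerate indices where $\psi_n=\theta$ (i.e.\ $\Delta_n=0$), which the paper's argument leaves implicit.
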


For our present purposes, it suffices to consider the domain $ \calF_\phi = \calF $ as defined above (we indicate changes if required), i.e., $ \calF $ consists of all c.d.f.s such that (A2)-(A3) are satisfied. 

Anticipating that we shall later consider vector-valued measurements of dimension $q' \in \N $, we use the space of bounded functions,  $ \calE = \ell^\infty( \R^{q'+q} ) $, and 
\[ 
\calD_{\calF} = D( \R^{q'} \times \R^q ; \R ) \cap \{ f : \R^{q'} \times \R^{q} \to \R \text{ with } \lim_{|\vecx|_\infty\to \infty} f(\vecx) = 0  \}
\] 
as the vector space hosting directions, i.e.,  the  vector space of cadlag functions on $ \R^{q'} \times \R^q $, $ q $ the dimension of the $ \vecZ_t $ and $q'=1 $ for the present framework, vanishing at infinity. We  equip $ \calF_\psi $ and  $ \calD_{\calF}  $ with the Skorohod metric ensuring measurability, but will work with the stronger supnorm $ \| f \|_\infty $  or  the norm
$
	\| f \|_{\calF }= \| f \|_\infty + \| f \|_{L_1},
$
where
\[
\| f \|_{\infty}  = \sup_{\vecx \in \R^{q'}, \vecy \in \R^{q}} | f(\vecx,\vecy) |, \qquad
\| f \|_{L_1} = \int_{\R^{q'} \times \R^q} | f(\vecx) | \, d\vecx.
\]
The choice $ \| \cdot \|_{\calF } $ has been proposed by  \cite{Shao1993} and ensures that the mean functional is continuously Fr\'echet differentiable  and thus Hadamard differentiable as well. However, for  truncated moments the supnorm suffices.

The functional $ T_k $ is well defined for distribution functions $ F \in \calF $ and $ \calD_{\calF} $ hosts the trajectories of differences between empirical distribution functions and their expectation,  as well as trajectories of $F$-Brownian bridges. Since the latter are continuous, a suitable tangent space is 
\[ 
\calD_{\calF,0} = C(  \R^{q'} \times \R^q ; \R ) \cap \calD_\calF.
\] 
Note that, for clarity, we denote the proper direction space for domain $ \calF $ by $ \calD_\calF $ and the suitably chosen tangent space for $ \calD_\calF $ by $ \calD_{\calF,0} $, and we shall use this notation for other domains and their direction spaces arising in the proofs as well. For example, a proper direction space for the domain $ \calF \times (0,1) $ arising in the proofs is $ \calD_{\calF \times (0,1)} = \calD_\calF \times (-1,1) $, and both spaces can be equipped with the metric induced by $ (f, t) \mapsto  \| f \|_\calF + |t| $, $ f \in \calF $, $ t \in (-1,1) $.

Clearly, since $ \calZ $ is a finite set, it suffices to study the weak limit theory of $ \sqrt{n}( \hat{\vecc}_n - \vecc) $, where
\[
\hat{\vecc}_n = ( \hat{c}_n( \vecz_1), \ldots, \hat{c}_n( \vecz_K ))^\top, \qquad
\vecc = (c_{\text{prop}}(\vecz_1), \ldots, c_{\text{prop}}(\vecz_K) )^\top.
\]
The following theorem shows that the functional underlying $ \hat{\vecc}_n $ and $ \vecc $ is tangentially Hadamard differentiable such that the CLT follows from the functional delta method. 

\begin{theorem} 
	\label{ThHadamard}
	Under Assumptions (A2) and (A3) the functional $ T_k(F) $, $ F \in \calF $, is Hadamard differentiable at each $ F \in \calF $ tangentially to $ \calD_{\calF,0} $ with derivative
	\begin{align*}
	T'_{k,F}(\Delta) &= - \frac{\Delta_U \circ F_U^{-1}\left( \frac{c_\alpha D_{\veca_k}^{\vecb_k} F_\vecZ}{\sum_j (D_{\veca_j}^{\vecb_j} F_\vecZ)^2 } \right) }{f_U \left( F_U^{-1}\left( \frac{c_\alpha D_{\veca_k}^{\vecb_k} F_\vecZ}{\sum_j (D_{\veca_j}^{\vecb_j} F_\vecZ)^2 } \right) \right)} \\
	& \quad +  c_\alpha \frac{D_{\veca_k}^{\vecb_k} \Delta_\vecZ  \sum_j (D_{\veca_j}^{\vecb_j} F_\vecZ)^2 - 2 D_{\veca_k}^{\vecb_k} F_\vecZ \sum_j D_{\veca_j}^{\vecb_j} F_\vecZ D_{\veca_j}^{\vecb_j} \Delta_\vecZ  }{ f_U\left( F_U^{-1}\left(  \frac{c_\alpha D_{\veca_k}^{\vecb_k} F_\vecZ}{\sum_j (D_{\veca_j}^{\vecb_j} F_\vecZ)^2 }\right)\right)  \left( \sum_j (D_{\veca_j}^{\vecb_j} F_\vecZ)^2 \right)^2 } 
	\end{align*}
	for $ \Delta = (\Delta_X, \Delta_\vecZ) \in \calD_\calF $, where $ f_U = F_U' $.

	Under Assumptions (A1)-(A4) we have the weak convergence 
	\[
	\sqrt{n}( T_k( \hat{F}_n ) - T_k(F)) \Rightarrow T_{k,F}'(\calB^0_F ),  \qquad n \to \infty,
	\]
	where 
	\begin{align*}
	T'_{k,F}( \calB^0_F )  & - \frac{\calB_U^0 \circ F_U^{-1}\left( \frac{c_\alpha D_{\veca_k}^{\vecb_k} F_\vecZ}{\sum_j (D_{\veca_j}^{\vecb_j} F_\vecZ)^2 } \right) }{f_U \left( F_U^{-1}\left( \frac{c_\alpha D_{\veca_k}^{\vecb_k} F_\vecZ}{\sum_j (D_{\veca_j}^{\vecb_j} F_\vecZ)^2 } \right) \right)} \\
	& \quad +  c_\alpha \frac{ D_{\veca_k}^{\vecb_k} \calB_\vecZ^0   \sum_j (D_{\veca_j}^{\vecb_j} F_\vecZ )^2- 2 D_{\veca_k}^{\vecb_k} F_\vecZ \sum_j D_{\veca_j}^{\vecb_j} F_\vecZ D_{\veca_j}^{\vecb_j} \calB_\vecZ^0   }{ f_U\left( F_U^{-1}\left(  \frac{c_\alpha D_{\veca_k}^{\vecb_k} F_\vecZ}{\sum_j (D_{\veca_j}^{\vecb_j} F_\vecZ )^2}\right)\right)  \left( \sum_j (D_{\veca_j}^{\vecb_j} F_\vecZ)^2 \right)^2 } 
	\end{align*}
	and
	\[
	\sqrt{n}( \hat \vecc_n - \vecc )  = \sqrt{n} \left( T_k( \hat F_n ) - T_k (F) \right)_{k=1}^K \Rightarrow \left( T_{k,F}'(\calB^0_F )\right)_{k=1}^K,
	\]
	as $ n \to \infty $. Here, $ \calB^0_F $ is the $ F $-Brownian bridge from Assumption (A4) with marginal Brownian bridges $ \calB^0_U = \calB_{F_U}^0 $ (a $ F_U $-Brownian bridge on $ [0,  1]$) and $ \calB^0_\vecZ = \calB_{F_\vecZ}^0 $ (a $ F_\vecZ $-Brownian bridge on $ [0,1]^q $).
\end{theorem}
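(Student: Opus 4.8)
\emph{Proof plan.} The plan is to exhibit $T_k$ as a composition of elementary maps, deduce tangential Hadamard differentiability from the chain rule, and then read off the weak limit from the functional delta method. Concretely I would write $T_k=\phi_3\circ\phi_2\circ\phi_1$, where $\phi_1\colon F\mapsto\bigl(F_U,(D_{\veca_j}^{\vecb_j}F_\vecZ)_{j=1}^K\bigr)$ extracts the $U$-marginal and the $K$ rectangle increments of the $\vecZ$-marginal, $\phi_2\colon\bigl(G,(a_j)_j\bigr)\mapsto\bigl(G,\,c_\alpha a_k/\sum_j a_j^2\bigr)$ forms the quantile level $\pi_k:=c_\alpha a_k/\sum_j a_j^2$, and $\phi_3\colon(G,p)\mapsto G^{-1}(p)$ is generalised quantile evaluation. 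The map $\phi_1$ is linear and $\|\cdot\|_\infty$-bounded — each $D_{\veca_j}^{\vecb_j}$ is an alternating sum of $2^q$ evaluations, so $|D_{\veca_j}^{\vecb_j}f|\le2^q\|f\|_\infty$ — hence Hadamard differentiable with derivative equal to itself, either directly or via Lemma~\ref{propHadamard}; the map $\phi_2$ is the identity in its first slot together with a rational function of the finitely many reals $a_j$, smooth on $\{\sum_j a_j^2>0\}$, so $\phi_2\circ\phi_1$ is Hadamard differentiable by ordinary differentiability of the finite-dimensional part and continuity of $\phi_1$. The essential ingredient is $\phi_3$: Assumption~(A2) makes $F_U$ continuously differentiable with positive density on the interior of its support and without boundary atoms, so the classical Hadamard differentiability of the inverse/quantile map — extended to a perturbed evaluation level — applies at $(F_U,\pi_k(F))$ tangentially to pairs $(\Delta_G,\Delta_p)$ with $\Delta_G$ continuous at $F_U^{-1}(\pi_k(F))$, with derivative $(\Delta_G,\Delta_p)\mapsto\bigl(\Delta_p-\Delta_G(F_U^{-1}(\pi_k(F)))\bigr)\big/ f_U\bigl(F_U^{-1}(\pi_k(F))\bigr)$.

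Next I would verify the domain and tangency conditions of the chain rule. Assumption~(A3) yields $\pi_k(F)=c_\alpha p_k/\|\vecp\|_2^2\in(0,1)$ strictly, and with (A2) this places $F_U^{-1}(\pi_k(F))$ in the interior of $\operatorname{supp}(d\Psi)$, where $f_U$ is positive and continuous, so none of the denominators above vanish; these being open conditions and $\pi_k$ continuous, every admissible perturbation $F+t_n\Delta_n$ stays in the relevant domains for $n$ large (all the chain rule needs), while any $\Delta\in\calD_{\calF,0}$ is continuous, hence continuous at $F_U^{-1}(\pi_k(F))$, so $\phi_3$'s tangency hypothesis holds with $\Delta_U$ the $U$-marginal of $\Delta$. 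The chain rule then delivers Hadamard differentiability of $T_k$ at $F$ tangentially to $\calD_{\calF,0}$, and computing $\pi'_{k,F}(\Delta)$ by the quotient rule in the $a_j=D_{\veca_j}^{\vecb_j}F_\vecZ$ (perturbed by $D_{\veca_j}^{\vecb_j}\Delta_\vecZ$) and substituting $\pi_k(F)$ reproduces the displayed derivative formula.

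For the limit theorem I would note that under (A2)--(A3) and independence of $U$ and $\vecZ$ the joint c.d.f.\ $F=\Psi\otimes F_\vecZ$ is continuous, so the $F$-Brownian bridge $\calB^0_F$ of Assumption~(A4) has a version with sample paths in $C(\R^{q'+q})$ vanishing at infinity, i.e.\ in $\calD_{\calF,0}$. Regarding $T_k$ as a functional on the larger domain of all c.d.f.s with $\sum_j(D_{\veca_j}^{\vecb_j}F_\vecZ)^2>0$ and admissible level — on which the generalised inverse is defined without requiring (A2), and in which $\hat F_n$ lies with probability tending to one since $\hat p_j\to p_j$ — the functional delta method (\cite{VaartWellner2023}) applied to the Hadamard derivative above and to $\sqrt{n}(\hat F_n-F)\Rightarrow\calB^0_F$ from (A4) gives $\sqrt{n}(T_k(\hat F_n)-T_k(F))\Rightarrow T'_{k,F}(\calB^0_F)$ for each $k$. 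Stacking the coordinates, $F\mapsto(T_1(F),\dots,T_K(F))$ is Hadamard differentiable tangentially to $\calD_{\calF,0}$ with derivative $(T'_{1,F},\dots,T'_{K,F})$, so one further application of the delta method yields $\sqrt{n}(\hat\vecc_n-\vecc)\Rightarrow(T'_{k,F}(\calB^0_F))_{k=1}^K$, the components being governed by the \emph{same} bridge $\calB^0_F$ and hence dependent; identifying the marginals $\calB^0_U=\calB^0_{F_U}$ and $\calB^0_\vecZ=\calB^0_{F_\vecZ}$ completes the proof.

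The hard part will be the Hadamard differentiability of the quantile map $\phi_3$ with its evaluation level itself perturbed, together with the bookkeeping guaranteeing that $\pi_k(F)$ stays strictly inside $(0,1)$ and its quantile strictly inside $\operatorname{supp}(d\Psi)$ under all admissible directions — precisely where the strict inequalities in Assumptions~(A2) and~(A3) are indispensable, since otherwise the density $f_U$ in the denominator could degenerate and the functional would fail to be differentiable.
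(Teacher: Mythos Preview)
Your proposal is correct and follows essentially the same route as the paper: decompose $T_k$ as the quantile map at a data-dependent level, establish Hadamard differentiability of each piece (the rectangle increments being linear and bounded, the level via the quotient rule, and the quantile map with perturbed argument via the classical result under (A2)), apply the chain rule, and conclude the CLT from the functional delta method together with (A4). The paper's decomposition $T_k=\Omega\circ R$ with $R=(\id,S)$ and $\Omega(F,p)=F_U^{-1}(p)$ is the same as your $\phi_3\circ(\phi_2\circ\phi_1)$ up to the cosmetic choice of whether to factor the level map $S$ through the extraction $\phi_1$; for the joint limit the paper invokes the Cram\'er--Wold device whereas you stack the coordinates and apply the delta method once more, which is equivalent here.
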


By the above theorem, the asymptotic distribution of $ \sqrt{n}( \hat{c}_{\text{prop}}( z_k) - c_{\text{prop}}(z_k)) $ is governed by the limiting Brownian bridge of the empirical process.

\section{Extensions to estimation from residuals}
\label{Sec:Residuals}


In applications, we need to estimate $ \mu(\cdot) $ and $ \sigma(\cdot) $ from the learning sample. 
When assuming that $ \mu(\cdot) = \mu $ and $ \sigma(\cdot) = \sigma $ are constants, one may use $ \hat\mu_n = \frac{1}{n} \sum_{t=1}^n X_t$, $ \hat \sigma_n^2 = \frac{1}{n} \sum_{t=1}^n X_t^2 - \hat\mu_n^2 $ and the residuals
\[
\hat{U}_t = \frac{X_t - \hat \mu_n}{ \hat \sigma_n}, \qquad 1 \le t \le n.
\]
We use  the residuals e.c.d.f.
\[
\tilde{F}_n(u, \vecz) = \frac{1}{n} \sum_{t=1}^n \eins( \hat{U}_t \le u, \vecZ_t \le \vecz )
\]
instead of $ \hat F_n(u,\vecz) $, and thus the associated estimator of $ c_{\text{prop}}( \vecz_k ) $ is defined by
\[
\tilde{c}_{\text{prop}}( \vecz_k ) = \tilde\Psi_n^{-1}\left(  \frac{1-\alpha}{\sum_{j=1}^K \hat p_j^2 } \hat p_k \right),
\]
where $ \tilde{\Psi}_n^{-1}(\cdot) $ is the quantile function of the first marginal of $ \tilde{F}_n(\cdot) $.  This leads to the thresholds
\[
\hat{t}_n(\vecz_k) = \hat \mu_n + \hat \sigma_n \tilde\Psi_n^{-1}\left(  \frac{1-\alpha}{\sum_{j=1}^K \hat p_j^2} \hat p_k \right)
\]
for the measurements $ X_t  $. 

Alternatively, one may wish to use estimators of conditional mean and standard deviation thus replacing $ \hat \mu_n $ by $ \hat \mu_n(\vecZ_i) $ and $ \hat{\sigma}_n $ by $ \hat{\sigma}_n(\vecZ_i) $ in the definition of $ \hat{U}_i $ which are then used in $ \tilde{F}_n(u,z) $ and the estimator $ \tilde{\Psi}_n^{-1}(\cdot) $. To keep the notation simple, we use the same notation for these quantities, since only the definition of the $ \hat{U}_i$ is affected. We propose to use category-wise (truncated) means and standard deviations, i.e.,
\begin{align*} 
	\hat{{\mu}}_n(\vecZ_i) &= \frac{ \sum_{t=1}^n X_t \eins_{\vecZ_t=\vecZ_i} }{ \sum_{t=1}^n \eins_{\vecZ_t=\vecZ_i} }, \\
	\hat{ {\sigma}}_n^{2}(\vecZ_i) &=  \frac{ \sum_{t=1}^n X_t^{2} \eins_{\vecZ_t=\vecZ_i} }{ \sum_{t=1}^n \eins_{\vecZ_t=\vecZ_i} } - \hat{{\mu}}_n^2(\vecZ_i),
\end{align*} 
such that the residuals are now defined by by $ \hat U_t = \frac{X_t - \hat\mu_n(\vecZ_t) }{\hat\sigma_n(\vecZ_t)} $, $ 1 \le t \le n $.
For an observation $ X_t $ with $ \vecZ_t= \vecz_k $, i.e., on the event $ \{\vecZ_t = \vecz_k \} $, this leads to the threshold
\[
\hat t_n( \vecZ_t ) = \hat \mu_n(\vecZ_t) + \hat{\sigma}_n(\vecZ_t) \tilde\Psi_n^{-1}\left(  \frac{1-\alpha}{\sum_{j=1}^K \hat p_j^2 } \hat p_k \right)
\]
with $ \hat \mu_n(\vecZ_t) = \hat \mu_n(\vecz_k) $ and $\hat{\sigma}_n(\vecZ_t)  = \hat{\sigma}_n(\vecz_k)  $.  

The rest of this section provides the details and shows that under mild assumptions the residual empirical process converges weakly to some Gaussian process, which in turn yields a CLT for the estimator $ \tilde{\vecc}_n $ when combined with our result on its differentiability. The residual empirical process has been studied from different perspectives in the literature to some extent. It is well known that estimation of parameters affects the asymptotic behaviour, see, e.g., \cite{Loynes1980} for residuals in regression models and the exposition in \cite[Ch.~5.5h.~]{ShorackWellner1986}. The results of this section contribute to this literature by studying the case of standardized multivariate observations by (truncated) moments and are tailored to our setting.

\subsection{Residual empirical process using marginal sample moments}

The following observation is crucial: The e.c.d.f. $ \tilde{F}_n(u, \vecz) $ can be expressed in terms of $ \hat{F}_{(X,\vecZ),n}(x, \vecz) $, since $ \hat{U}_t \le u $ is equivalent to $ X_t \le \hat{\mu}_n + \hat{\sigma}_n u $. Concretely, we have
\begin{equation}
	\label{Repr1}
	\tilde{F}_n(u, \vecz) = \hat{F}_{(X,\vecZ),n}( \hat{\mu}_n + \hat{\sigma}_n u, \vecz ), \qquad u \in \R, \vecz  \in \R^q.
\end{equation}
Establishing Hadamard differentiability of the underlying  functional  which maps $ F = F_{(X,\vecZ)} $ to the c.d.f. $ (u,\vecz) \mapsto  F( \mu(F_X) + \sigma(F_X) u, \vecz  ) $ is a special case of the corresponding 
result for the multivariate case, which is of interest for future generalizations. Therefore, we broaden the setting and assume for the rest of this section that $ X_t $ and hence $ U_t $ are random vectors of dimension $ q' \in \N $, which we indicate by using bold notation. Let us introduce the functionals
\[
\underline\bfmu(F) = \begin{pmatrix} \bfmu(F) \\ \vecnull_q \end{pmatrix},
\quad
\bfmu(F) = \int_{-\bftau}^{\bftau} \vecx d F_\vecX(\vecx),	
\]
and 
\[ 
\underline\bfSigma(F) = \begin{pmatrix}  \bfSigma(F) & \vecnull_q^\top\\ \vecnull_q & \matid_q \end{pmatrix},  \quad
\bfSigma(F) = \int_{-\bftau}^{\bftau}( \vecx - \bfmu(F_X))(\vecx-\bfmu(F_X))^\top \, d F_X(\vecx),
\]
for c.d.f.s $ F = F_{(X,\vecZ)} $, and denote the elements of $ \bfmu(F) $ by $ \mu_i(F) $ and those of $ \bfSigma(F) $ by  $ \Sigma_{ij}(F) $. As usual, we write $ \mu(F) $ and $ \sigma^2(F) $, if $ q' = 1 $.
In these definitions, $ \bftau = (\tau, \ldots, \tau)^\top \in \R^{q+q'} $ for a truncation constant $ \tau $ satisfying $ 0 <\tau < \infty $. By truncation, there is no need to assume finite moments.  Clearly, the integrals are understood elementwise, such that 
\[ 
\mu_i(F) = E(X_{1i} \eins(|X_{1i}|\le \tau)), \quad  \Sigma_{ij}(F) = E\left( (X_{1i}^{(\tau)}-\mu_i(F))(X_{1j}^{(\tau)}-\mu_j(F)) \right), 
\] 
for $1 \le i, j \le q' $, where $ X_{1i}^{(\tau)} = X_{1i} \eins(|X_{1i} |\le \tau ) $, $ 1\le i \le q' $. Note that the formula for $ \Sigma_{ij}(F)$ makes use of the fact that $ |\mu_i(F) | \le \tau $. If $ F(\cdot ) $ has bounded support, i.e. for bounded errors, and provided $ \tau $ is large enough, these functionals coincide with mean and covariance matrix. Otherwise, the bias will be small if $ \tau $ is selected large enough.
Define the  estimators  
\[ 
\hat{\underline{\bfmu}}_n = \begin{pmatrix} \bfmu(\hat F_{(X,\vecZ),n}) \\ \vecnull_q \end{pmatrix}, \qquad  \hat{\underline{\bfSigma}}_n = \begin{pmatrix}  \bfSigma(\hat F_{(X,Z),n}) & \vecnull_q^\top \\  \vecnull_q &  \matid_q \end{pmatrix}
\]
of $ \underline{\bfmu}(F) $ and $ \underline{\bfSigma}(F) $. Note that
\begin{align*}
\hat\bfmu_n &= \bfmu(\hat F_{(X,\vecZ),n}) = \frac{1}{n} \sum_{t=1}^n \vecX_t^{(\tau)}, \\
\quad 
\hat\bfSigma_n &= \bfSigma(\hat F_{(X,\vecZ),n})  = \frac{1}{n} \sum_{t=1}^n  (\vecX_t^{(\tau)} - \hat\bfmu_n)(\vecX_t^{(\tau)} - \hat\bfmu_n)^\top,
\end{align*}
where $ \vecX_t^{(\tau)} = ( X_{t1}^{(\tau)}, \ldots, X_{tq'}^{(\tau)})^\top $.

After these preparations, we are now in a position to discuss the crucial representations for the multivariate version of \eqref{Repr1}. First, note that 
\[
\vecU_t =\bfSigma^{-1/2}(F)(  \vecX_t  - \bfmu(F) ) \sim \Psi, \qquad 1 \le t \le n,
\]
and
\[
\hat\vecU_t = \hat\bfSigma_n^{-1/2}(\vecX_t  - \hat\bfmu_n ), \qquad 1 \le t \le n.
\]
The residuals e.c.d.f. 
\[
\hat F_{\hat \vecU,n}(\vecx ) = \frac1n \sum_{t=1}^n \eins_{\{ \hat \vecU_t \le \vecx  \}}, \qquad \vecx \in \R^{q'}, 
\]
of $ \hat\vecU_1, \ldots, \hat\vecU_n $ can then be written in the form
\[
\hat{F}_{\hat \vecU,n}( \vecx  ) = \hat{F}_{\vecX,n}\left( \hat{\bfmu}_n + \hat{\bfSigma}_n^{1/2} \vecx \right). 
\]
Noting that the affine transformation on the right side induces a map between spaces of distribution functions, the underlying  functional $ H : \calF \to \calE $ can be identified as the map
\[
H( F )(\vecx ) = F_\vecX( \bfmu(F) + \bfSigma^{1/2}(F) \vecx ), \qquad F \in \calF.
\]
Then we get the representations
\[
F_\vecU( \cdot ) = H( F_\vecX)(\cdot), \qquad \hat{F}_{\hat \vecU,n}( \cdot ) = H( \hat{F}_{\vecX,n} )( \cdot ), \qquad F \in \calF. 
\]
The  multivariate version of \eqref{Repr1} taking into account $ \vecZ $ is 
\begin{equation}
	\label{Repr2}
	\tilde{F}_n( \vecx, \vecz ) = \hat F_{(\vecX,\vecZ),n}\left( \underline{\hat \bfmu}_n + \underline{\hat\bfSigma}_n^{1/2} \begin{pmatrix} \vecx \\ \vecz \end{pmatrix}  \right),
\end{equation}
and if we introduce the functional $ \underline{H}: \calF \to \calE $,
\[
\underline{H}( F )(\cdot) = F( \underline{\bfmu}(F) + \underline{\bfSigma}^{1/2}(F) \cdot), \qquad F \in  \calF,
\]
we have the  representations
\begin{align}
	\tilde F_n(\cdot ) &= \underline{H}( \hat F_{(\vecX,\vecZ),n} )( \cdot ) ,\\
	F_{(\vecU,\vecZ)}( \cdot) & = \underline{H}( F_{(\vecX,\vecZ)} )( \cdot ) .
\end{align}
Below we shall show that $ \underline{H} $ is Hadamard differentiable from which the intended CLT and bootstrap CLT will follow.

Let
\[
\calF_2 = \calF \cap \calS_2, \calD_{\calF_2} = \calD_\calF, \calD_{\calF_2,0} = \calD_{\calF,0} , 
\]
where
\[
\qquad \calS_2 = \{ F: \text{$\bfSigma(F)$ is regular} \}. 
\]
For $ F \in \calF' $ the results of the previous Lemma are applicable and the function $ \underline{H} $ turns out to be Hadamard differentiable. Denote by $ A(\vecm, \matS) $ the affine transformation 
\[
\begin{pmatrix} \vecx \\ \vecz \end{pmatrix} \mapsto 
\begin{pmatrix}  \vecm + \matS \vecx \\ \vecz \end{pmatrix}, \qquad 
\begin{pmatrix} \vecx \\ \vecz \end{pmatrix} \in \R^{q'} \times \R^q,
\]

\begin{theorem} 
	\label{ThResidualProcess}
	Suppose that Assumptions (A1)--(A4) are fulfilled. Then the functional $ \underline{H} : \calF_2 \to \calE $,
	\[
	F \mapsto \underline{H}(F)(\vecx, \vecz) = F  \left( \underline{\bfmu}(F) + \underline{\bfSigma}(F)^{1/2} \left( \vecx \atop  \vecz \right) \right), 
	\]
	is Hadamard-differentiable at each $ F \in \calF_2 $ with derivative
	\[
	\underline{H}'_F(\Delta ) = A\left( \int_{-\bftau}^{\bftau}  \vecu \, d\Delta( \vecu ),  D \bfSigma(F)^{1/2} \vecop  \bfSigma'_F( \Delta ) \right),
	\]
	for  $ \Delta \in \calD_{\calF_2,0} $, where $ D \bfSigma(F)^{1/2}  $ and $ \bfSigma'_F( \cdot) $ are defined in \eqref{DefDifferentialSigmaRoot} and \eqref{DefDifferentialS}, respectively. 
		
	The residual empirical process converges weakly,
	\[
	\sqrt{n}( \hat{F}_{\hat \vecU,n}(\cdot) - F_{\vecU}( \cdot) ) \Rightarrow \calB_{\bfmu(F),\bfSigma(F)}^0(\cdot) =
	\underline{H}'_F( \calB_F^0 )(\cdot),
	\]
	as $ n \to \infty $. 
	
\end{theorem}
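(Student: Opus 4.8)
The plan is to realize $\underline H$ as a composition of elementary maps, prove Hadamard differentiability of each factor, assemble the derivative by the chain rule, and then read off the weak convergence from Assumption~(A4) via the functional delta method. Concretely, I would write $\underline H = \Phi\circ\Lambda$, where $\Lambda$ carries a c.d.f. $F$ to the triple $(F,\underline\bfmu(F),\underline\bfSigma(F)^{1/2})$ --- the c.d.f. itself together with its truncated location vector and the square root of its augmented truncated scatter matrix --- and $\Phi$ is the post-composition-with-an-affine-map evaluation, $\Phi(G,\vecm,\matS)(\vecx,\vecz)=G\bigl(A(\vecm,\matS)(\vecx,\vecz)\bigr)$, where the $\vecZ$-coordinates are left untouched so that the affine map always has the block form induced by $A(\bfmu(F),\bfSigma(F)^{1/2})$. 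By \eqref{Repr2} and the fact that the truncated sample moments are exactly $\bfmu(\hat F_{(\vecX,\vecZ),n})$ and $\bfSigma(\hat F_{(\vecX,\vecZ),n})$, one has $\tilde F_n=\underline H(\hat F_{(\vecX,\vecZ),n})$ and $F_{(\vecU,\vecZ)}=\underline H(F_{(\vecX,\vecZ)})$, which is what makes the delta method applicable.

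For the factor $\Lambda$: the truncated moment functionals $\bfmu(F)=\int_{-\bftau}^{\bftau}\vecx\,dF_\vecX(\vecx)$ and $\bfSigma(F)=\int_{-\bftau}^{\bftau}(\vecx-\bfmu(F))(\vecx-\bfmu(F))^\top dF_\vecX(\vecx)$ are, after integration by parts, bounded (bi)linear functionals of $F$ on the whole space, the truncation at $\bftau$ removing all tail contributions; hence $\bfmu$ is continuously Fr\'echet differentiable with respect to $\|\cdot\|_\infty$ with derivative $\Delta\mapsto\int_{-\bftau}^{\bftau}\vecu\,d\Delta(\vecu)$ (interpreted via integration by parts for a cadlag increment $\Delta$), and $\bfSigma$ likewise, the quadratic correction being handled by the product rule to yield $\bfSigma'_F$; by Lemma~\ref{propHadamard} both are Hadamard differentiable tangentially to $\calD_{\calF_2,0}$. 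The matrix square root $\matS\mapsto\matS^{1/2}$ is analytic on the open cone of positive definite matrices --- this is precisely where the restriction to $\calF_2=\calF\cap\calS_2$ (so that $\bfSigma(F)$ is regular) is used --- hence Fr\'echet differentiable, with differential $D\bfSigma(F)^{1/2}$ the solution of the Sylvester equation $\bfSigma(F)^{1/2}Y+Y\bfSigma(F)^{1/2}=\bfSigma'_F(\Delta)$. Composing by the chain rule yields Hadamard differentiability of $\Lambda$ with derivative $\Delta\mapsto\bigl(\Delta,\int_{-\bftau}^{\bftau}\vecu\,d\Delta(\vecu),D\bfSigma(F)^{1/2}\vecop\bfSigma'_F(\Delta)\bigr)$, embedded in the augmented $(q'+q)$-blocks.

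The core step, and the one I expect to be the main obstacle, is Hadamard differentiability of the post-composition map $\Phi$ at $(F,\bfmu(F),\bfSigma(F)^{1/2})$ tangentially to $\calD_{\calF_2,0}\times\R^{q'}\times\{\text{symmetric matrices}\}$. Given $t_n\downarrow0$ and directions converging to $(\Delta,\vech,\matK)$ with $\Delta$ continuous and vanishing at infinity, write $\vecm_n,\matS_n$ for the perturbed location/scale pair and $\vecm,\matS$ for the unperturbed one, and decompose the difference quotient, evaluated pointwise at $(\vecx,\vecz)$, as $\Delta_n\circ A(\vecm_n,\matS_n) + t_n^{-1}\bigl(F\circ A(\vecm_n,\matS_n)-F\circ A(\vecm,\matS)\bigr)$. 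The first summand converges uniformly to $\Delta\circ A(\vecm,\matS)$ because $\Delta_n\to\Delta$ uniformly, $\Delta$ is uniformly continuous, and the argument perturbation is $O(t_n)$; the second is handled by a first-order Taylor expansion of $F$ in its $\vecX$-arguments about $A(\vecm,\matS)(\vecx,\vecz)$, using that under (A2) and the regression model $F$ possesses a (conditional) density in the $\vecX$-directions. Making the Taylor remainder and the gradient term uniform in $(\vecx,\vecz)\in\R^{q'+q}$ --- exploiting continuity of that density together with the vanishing-at-infinity property of the directions so that boundary contributions are negligible --- is the delicate point, and it is what forces the tangential restriction to continuous functions and the use of the identity block in $\underline\bfSigma$ keeping the $\vecZ$-direction unperturbed.

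Finally, the chain rule gives $\underline H'_F(\Delta)=\Phi'_{(F,\bfmu(F),\bfSigma(F)^{1/2})}\bigl(\Lambda'_F(\Delta)\bigr)$, which upon substitution is exactly the affine perturbation $A\bigl(\int_{-\bftau}^{\bftau}\vecu\,d\Delta(\vecu),\,D\bfSigma(F)^{1/2}\vecop\bfSigma'_F(\Delta)\bigr)$ of $F$ appearing in the statement, the $A(\cdot,\cdot)$ notation encoding both the $\Delta$-composition term and the density-weighted perturbation term produced in Step~3. For the weak convergence, Assumption~(A4) gives $\sqrt n(\hat F_{(\vecX,\vecZ),n}-F)\Rightarrow\calB^0_F$, whose trajectories lie in $\calD_{\calF_2,0}$; moreover $\hat F_{(\vecX,\vecZ),n}\in\calF_2$ with probability tending to one by consistency of the truncated moments together with regularity of $\bfSigma(F)$. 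Hence the functional delta method of \cite{VaartWellner2023} applies to the Hadamard-differentiable $\underline H$ and yields $\sqrt n(\tilde F_n-F_{(\vecU,\vecZ)})=\sqrt n\bigl(\underline H(\hat F_{(\vecX,\vecZ),n})-\underline H(F)\bigr)\Rightarrow\underline H'_F(\calB^0_F)$; the stated marginal version $\sqrt n(\hat F_{\hat\vecU,n}-F_\vecU)\Rightarrow\underline H'_F(\calB^0_F)$ follows by the same argument applied to the $\vecZ$-free functional $H$, equivalently by letting the $\vecZ$-coordinate tend to infinity.
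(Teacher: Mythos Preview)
Your proposal is correct and follows essentially the same route as the paper: decompose $\underline H$ as the composition of the moment-extraction map $F\mapsto(F,\bfmu(F),\bfSigma(F)^{1/2})$ with the affine-evaluation map, establish Hadamard differentiability of each factor (via Lemma~\ref{LemmaDerivs} for the truncated moments and the matrix square root) and assemble by the chain rule, then invoke the functional delta method under Assumption~(A4). If anything, you are more explicit than the paper about the Hadamard differentiability of the evaluation map $\Phi$ and the two-term structure it produces, which the paper's proof dispatches in a single line.
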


We are now in a position to provide the weak convergence of the estimator $  \tilde{\vecc}_n $  using the quantile function associated to the e.c.d.f. of the residuals, which follows from  Theorem~\ref{ThResidualProcess}.

\begin{theorem}
	\label{Th_CLT}
	\[
	\sqrt{n}( \tilde \vecc_n - \vecc )   \Rightarrow ( T'_{k, \underline{H}(F)}( \calB^0_{\bfmu(F),\bfSigma(F)}) )_{k=1}^K ,
	\]
	as $ n \to \infty $.
\end{theorem}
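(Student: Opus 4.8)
The plan is to realise $\tilde{\vecc}_n$ and $\vecc$ as one composite statistical functional evaluated at the empirical and the true joint c.d.f.\ of $(\vecX,\vecZ)$, to establish Hadamard differentiability of that functional by chaining Theorem~\ref{ThResidualProcess} with Theorem~\ref{ThHadamard}, and then to conclude by the functional delta method using Assumption~(A4). The starting point is the algebraic identity behind the estimator: by the representation~\eqref{Repr2} the residual e.c.d.f.\ equals $\tilde F_n(\cdot)=\underline{H}(\hat F_{(\vecX,\vecZ),n})(\cdot)$; its first marginal is precisely the one whose quantile function defines $\tilde\Psi_n^{-1}$, and, under the latent-variable identification of Assumption~(A3), the difference-operator value $D_{\veca_k}^{\vecb_k}$ of the $\vecZ$-marginal of $\hat F_{(\vecX,\vecZ),n}$ equals the relative frequency $\hat p_k$. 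Hence, with $T_k$ the functional of Theorem~\ref{ThHadamard} and $F=F_{(\vecX,\vecZ)}$, one has $\tilde c_{\text{prop}}(\vecz_k)=T_k(\underline{H}(\hat F_{(\vecX,\vecZ),n}))$ and $c_{\text{prop}}(\vecz_k)=T_k(\underline{H}(F))$, so it suffices to prove $\sqrt{n}\bigl((T_k\circ\underline{H})(\hat F_{(\vecX,\vecZ),n})-(T_k\circ\underline{H})(F)\bigr)_{k=1}^K\Rightarrow\bigl(T'_{k,\underline{H}(F)}(\calB^0_{\bfmu(F),\bfSigma(F)})\bigr)_{k=1}^K$.

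Next I would apply the chain rule for Hadamard differentiability to $T_k\circ\underline{H}$. By Theorem~\ref{ThResidualProcess}, $\underline{H}$ is Hadamard differentiable at $F$ tangentially to $\calD_{\calF_2,0}$, provided $F\in\calF_2$, i.e.\ $\bfSigma(F)$ is regular --- for $q'=1$ this is just $\Var(X_1^{(\tau)})>0$, which holds since $F_X$ is nondegenerate. By Theorem~\ref{ThHadamard}, $T_k$ is Hadamard differentiable at each element of $\calF$ tangentially to $\calD_{\calF,0}$. Recalling $\calD_{\calF_2,0}=\calD_{\calF,0}$, to chain the two maps it remains to check: (i) $\underline{H}(F)=F_{(\vecU,\vecZ)}\in\calF$ --- its $\vecU$-marginal equals $F_\vecU=\Psi$ by construction (or at worst an invertible affine image thereof), so (A2) holds, while its $\vecZ$-marginal is the latent c.d.f.\ $F_\vecZ$, so (A3) holds with the same admissible $\vecp$; and (ii) $\underline{H}'_F$ maps $\calD_{\calF,0}$ into itself. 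For (ii) I would use the explicit form $\underline{H}'_F(\Delta)=A\bigl(\int_{-\bftau}^{\bftau}\vecu\,d\Delta(\vecu),\,D\bfSigma(F)^{1/2}\,\vecop\,\bfSigma'_F(\Delta)\bigr)$: $\underline{H}'_F(\Delta)$ is a composition of $\Delta$ with the affine map $A(\bfmu(F),\bfSigma(F)^{1/2})$ together with correction terms with bounded continuous coefficients, hence continuous; and it vanishes at infinity because $A(\bfmu(F),\bfSigma(F)^{1/2})$ is a homeomorphism of $\R^{q'}\times\R^q$ equal to the identity in the $\vecz$-coordinate, so a function vanishing at infinity stays so after composition with it. Granting (i)--(ii), the chain rule gives $(T_k\circ\underline{H})'_F=T'_{k,\underline{H}(F)}\circ\underline{H}'_F$.

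It then remains to invoke the functional delta method. Under Assumptions~(A1)--(A4), $\sqrt{n}(\hat F_{(\vecX,\vecZ),n}-F)\Rightarrow\calB^0_F$, with $\calB^0_F$ having sample paths in $\calD_{\calF,0}$ almost surely (continuity under (A2)--(A3); decay at infinity because the $F$-Brownian-bridge covariance kernel degenerates to $0$ variance at infinite argument) and with the required measurability supplied by the Skorohod-metric topology on $\calF$ and $\calD_\calF$. Applying the delta method to the $\R^K$-valued functional $(T_k\circ\underline{H})_{k=1}^K$ yields $\sqrt{n}(\tilde{\vecc}_n-\vecc)\Rightarrow\bigl(T'_{k,\underline{H}(F)}(\underline{H}'_F(\calB^0_F))\bigr)_{k=1}^K$, and substituting the identification $\underline{H}'_F(\calB^0_F)=\calB^0_{\bfmu(F),\bfSigma(F)}$ from Theorem~\ref{ThResidualProcess} gives exactly the asserted limit.

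I expect the main obstacle to be part~(ii) of the chain-rule verification: one must control the correction terms $D\bfSigma(F)^{1/2}\,\vecop\,\bfSigma'_F(\Delta)$ and the affine substitution carefully enough to guarantee that the image of the tangent space $\calD_{\calF,0}$ under $\underline{H}'_F$ again consists of continuous functions vanishing at infinity, so that the derivative of $T_k$ at the base point $\underline{H}(F)$ can legitimately be applied to it. Alongside this sits the routine but necessary bookkeeping --- already anticipated in the excerpt's remark that the domain of a Hadamard-differentiable map "may be an arbitrary set" --- that $T_k\circ\underline{H}$ must be viewed on a domain large enough to contain the step-function empirical c.d.f.s $\hat F_{(\vecX,\vecZ),n}$, even though these do not lie in $\calF_2$.
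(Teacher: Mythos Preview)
Your proposal is correct and follows essentially the same route as the paper: represent $\tilde c_{\text{prop}}(z_k)=T_k\circ\underline{H}(\hat F_n)$ and $c_{\text{prop}}(z_k)=T_k\circ\underline{H}(F)$, combine the Hadamard differentiability of $\underline{H}$ (Theorem~\ref{ThResidualProcess}) and of $T_k$ (Theorem~\ref{ThHadamard}) via the chain rule, and apply the functional delta method to the weak convergence $\sqrt{n}(\hat F_n-F)\Rightarrow\calB^0_F$. The paper's proof is considerably terser---it simply asserts the representations and invokes the delta method---whereas you additionally spell out the tangent-space compatibility checks (i) and (ii) and flag the domain bookkeeping for empirical c.d.f.s; these are exactly the points the paper leaves implicit.
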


\subsection{Empirical residual process using conditional  sample moments}

Let us now assume that the truncated conditional mean and covariance matrix depend on $ \vecz$, such that
\[
\bfmu(\vecz) = \EE( \vecX^{(\tau)} | \vecZ = \vecz), \qquad \bfSigma(\vecz) = \Cov( \vecX^{(\tau)} | \vecZ=\vecz ).
\]  
Natural (truncated) estimators are obtained by plugging in $ \hat F_{(\vecX,\vecZ),n} $ instead of $ F = F_{(\vecX,\vecZ)}$. Thus, let for $ \vecz \in \calZ = \{ \vecz_1, \ldots, \vecz_K \} $
\[
\hat A_n(\vecz) = A(\hat F_n, \vecz ) = \frac1n \sum_{t=1}^n \vecX_t^{(\tau)} \eins_{\{\vecZ_i=\vecz\}}, \qquad
\hat N_n(\vecz) = N( \hat F_n, \vecz ) = \frac1n \sum_{t=1}^n \eins_{\{\vecZ_t=\vecz\}},
\]
and
\[
\hat \vecB_n(\vecz) = \vecB( \hat F_n, \vecz ) = \frac{1}{n} \sum_{t=1}^n (\vecX_t^{(\tau)} - \mu(\hat F_n,\vecz))(\vecX_t^{(\tau)} - \mu(\hat F_n,\vecz))^\top \eins_{\{\vecZ_t=\vecz\}}.
\]
The resulting estimators are defined for any $ \vecz \in \calZ $ with $ \hat{N}_n(\vecz) > 0 $ by
\[
\hat \bfmu_n(\vecz) = \hat A_n(\vecz) / \hat N_n(\vecz)
\]
and
\[
\hat \bfSigma_n(\vecz) = \hat \vecB_n(\vecz) / \hat N_n(\vecz).
\]
If we define for any c.d.f. $ F = F_{(\vecX,\vecZ)} \in \calF_2 $ and $ \vecz \in \calZ $ 
\begin{align*}
	A(F,\vecz) &= \int_{-\bftau}^{\bftau} \vecx \eins_{\{ \vecu=\vecz \}} \, dF(\vecx,\vecu), \\
	N(F,\vecz) & = \int_{\veca_k}^{\vecb_k} \, dF_\vecZ(\vecu),  \quad \text{if $\vecz = \vecz_k$},\\
	\vecB(F,\vecz) &= \int_{-\bftau}^{\bftau} (\vecx - \bfmu(F,\vecz))(\vecx - \bfmu(F,\vecz) )^\top \eins_{\{\vecu=\vecz\}}\, d F(\vecx, \vecu),  
\end{align*}
then  
\[ 
\bfmu(F,\vecz) = A(F,\vecz) / N(F,\vecz) , \qquad 
\bfSigma(F,\vecz) = \vecB(F,\vecz) / N(F,\vecz).
\]
The Hadamard differentiability of $ A(F,\vecz) $ and $ N(F,\vecz) $ follows similarly as for the truncated moments, and an application of the quotient rule of Hadamard differentiability entails the Hadamard differentiability of $ \bfmu(F,\vecz) $ and $ \bfSigma(F,\vecz) $, which again yields the differentiability of
\[
\underline{H}(F,\vecz) = \underline{H}(F,\vecz)(\vecx, \vecz) = F  \left( \underline{\bfmu}(F,\vecz) + \underline{\bfSigma}(F,\vecz)^{1/2} \left( \vecx \atop  \vecz \right) \right),
\]
and in turn the CLT of $ \tilde{c}_n(\vecz) $. For sake of brevity of presentation, we omit the details and formulas.

\section{Bootstrap}
\label{Sec:Bootstrap}

A feasible and usually accurate general approach for uncertainty quantification of estimators is the bootstrap. In our setting, we need to approximate the stochastic behaviour of the residual empirical process from which the estimator $ \tilde{c}_n(\cdot) $ is calculated. This problem has been addressed in the literature for linear model residuals, \cite{KoulLahiri1994}, as well as for residuals in nonparametric models, see \cite{NeumeyervanKeilegom2019} and the references given there. The specific result of the last section, which allows to reduce the problem to the empirical process of the original data, provides us with the following bootstrap approach when combined with the Hadamard differentiability of the estimator of interest.

Given the learning sample $ (X_1, \vecZ_1), \ldots, (X_n, \vecZ_n) $, let 
\[ (X_1^*, \vecZ_1^*), \ldots, (X_n^*, \vecZ_n^*)  \stackrel{i.i.d.}{\sim} \hat F_n \] be a nonparametric bootstrap sample of conditionally i.i.d. observations distributed according to the e.c.d.f. $ \hat F_n $. Next, if the estimator $ \tilde{c}_{\text{prop}}(\cdot) $ has been computed using marginal standardization, calculate the bootstrap residuals
\[
\hat U_t^* = \frac{U_t^* - \hat{{\mu}}_n^*}{ \hat{ {\sigma}}_n^*}, \qquad 1 \le i \le n,
\]
where  
\begin{align*} 
	\hat{{\mu}}_n^* &= \mu( \hat F_{(X,\vecZ), n}^* ) = \frac{1}{n} \sum_{i=1}^n X_i^{(\tau)*}, \\
	\hat{ {\sigma}}_n^{*2} &= \sigma(\hat F_{(X,\vecZ),n}^{*2} ) = \frac{1}{n} \sum_{i=1}^n (X_i^{(\tau)*}  - \overline{X^{(\tau)*}})^2.
\end{align*}
Otherwise, if conditional sample mean and conditional sample standard deviation have been used, calculate the bootstrap residuals
\[
\hat U_t^* = \frac{U_t^* - \hat{{\mu}}_n^*(\vecZ_t^*)}{ \hat{ {\sigma}}_n^*(\vecZ_t^*)}, \qquad 1 \le t \le n,
\]
where
\begin{align*} 
	\hat{{\mu}}_n^*(\vecZ_t^*) &= \frac{ \sum_{j=1}^n X_j^{(\tau)*} \eins_{\vecZ_j^*=\vecZ_t^*} }{ \sum_{j=1}^n \eins_{\vecZ_j^*=\vecZ_t^*} }, \\
	\hat{ {\sigma}}_n^{*2}(\vecZ_t^*) &=  \frac{ \sum_{j=1}^n X_j^{(\tau)*2} \eins_{\vecZ_j^*=\vecZ_t^*} }{ \sum_{j=1}^n \eins_{\vecZ_j^*=\vecZ_t^*} } - \hat{{\mu}}_n^*(\vecZ_t^*)^2 .
\end{align*}
Let $ \tilde F_n^* $ be the e.c.d.f. of $ (\hat U_1^*, \vecZ_1^*), \ldots, (\hat U_n^*, \vecZ_n^*) $. The bootstrap estimator is now defined by
\[
\hat{c}_{\text{prop}}^*( z_k ) = \tilde\Psi_n^*{}^{-1} \left( \frac{1-\alpha}{\sum_{j=1}^K (\hat p_j^*)^2 } \hat p_k^* \right)
\]
where $ \hat p_j^* $, $ 1 \le j \le K $, are the relative frequencies in the bootstrap sample $ \vecZ_1^*, \ldots, \vecZ_n^* $ and
$
\tilde \Psi_n^*{}^{-1}( \cdot )
$ is the quantile function of the first marginal d.f. of $ \tilde{F}_n^*(\cdot ) $.

\begin{theorem} 
	\label{Th_BTCLT}
	Assume that (A1)-(A4) are fulfilled. Then, under the conditional probability $ P^* $ given $ (X_1,\vecZ_1), \ldots, (X_n,\vecZ_n) $, we have outer $ P $-almost surely
	\[
	\left( \sqrt{n}( \tilde c_{\text{prop}}^*(\vecz_k ) - \tilde c_{\text{prop}}( \vecz_k ) ) \right)_{k=1}^K
	\Rightarrow ( T'_{k, \underline{H}(F)}( \calB^0_{\bfmu(F),\bfSigma(F)}) )_{k=1}^K,
	\]
	as $ n \to \infty $, such that the bootstrap is outer $P$-almost surely consistent.
\end{theorem}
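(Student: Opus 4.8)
The plan is to represent the bootstrapped estimator as a Hadamard-differentiable functional of the nonparametric bootstrap empirical measure and then invoke the bootstrap version of the functional delta method (cf.\ Theorem~3.9.11 in \cite{VaartWellner2023}). First I would record that, exactly as the reduction \eqref{Repr2} was obtained for $\tilde F_n$, the bootstrap residual e.c.d.f.\ satisfies
\[
\tilde F_n^*(\vecx,\vecz) = \hat F_{(\vecX,\vecZ),n}^*\!\left( \underline{\hat\bfmu}_n^* + (\underline{\hat\bfSigma}_n^*)^{1/2} \begin{pmatrix} \vecx \\ \vecz \end{pmatrix} \right),
\]
where $\underline{\hat\bfmu}_n^* = \underline\bfmu(\hat F_{(\vecX,\vecZ),n}^*)$ and $\underline{\hat\bfSigma}_n^* = \underline\bfSigma(\hat F_{(\vecX,\vecZ),n}^*)$ are the very functionals $\underline\bfmu,\underline\bfSigma$ evaluated at the bootstrap e.c.d.f.; hence $\tilde F_n^* = \underline H(\hat F_{(\vecX,\vecZ),n}^*)$, and since moreover the bootstrap class frequencies $\hat p_k^*$ equal $D_{\veca_k}^{\vecb_k}$ of the $\vecZ$-marginal of $\hat F_{(\vecX,\vecZ),n}^*$, this gives $\tilde c_{\text{prop}}^*(\vecz_k) = T_k(\underline H(\hat F_{(\vecX,\vecZ),n}^*))$. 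Writing $\Phi := (T_k\circ\underline H)_{k=1}^K$, we thus have $\tilde{\vecc}_n^* = \Phi(\hat F_{(\vecX,\vecZ),n}^*)$ and $\tilde{\vecc}_n = \Phi(\hat F_{(\vecX,\vecZ),n})$, and by the chain rule recalled in Section~\ref{Sec:PropThreshold} combined with Theorem~\ref{ThResidualProcess} (differentiability of $\underline H$ at $F\in\calF_2$) and Theorem~\ref{ThHadamard} (differentiability of $T_k$ at $\underline H(F)\in\calF$), $\Phi$ is Hadamard differentiable at $F\in\calF_2$ tangentially to $\calD_{\calF,0}$, with $\Phi_F'(\calB^0_F) = (T'_{k,\underline H(F)}(\calB^0_{\bfmu(F),\bfSigma(F)}))_{k=1}^K$ --- precisely the limit appearing in the statement, and by Theorem~\ref{Th_CLT} also the limit of $\sqrt{n}(\tilde{\vecc}_n-\vecc)$, so that consistency follows at once.

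Second, I would use the classical result on the nonparametric bootstrap of the empirical process: since the bootstrap sample is conditionally i.i.d.\ from $\hat F_{(\vecX,\vecZ),n}$, one has $\sqrt{n}(\hat F_{(\vecX,\vecZ),n}^* - \hat F_{(\vecX,\vecZ),n}) \Rightarrow \calB^0_F$ conditionally on the data, outer $P$-almost surely, where $\calB^0_F$ is the $F$-Brownian bridge of Assumption (A4), whose sample paths are a.s.\ continuous and therefore concentrated on the tangent space $\calD_{\calF,0}$. Applying the bootstrap delta method to $\Phi$ then yields, conditionally on the data and outer $P$-a.s.,
\[
\sqrt{n}\bigl(\Phi(\hat F_{(\vecX,\vecZ),n}^*) - \Phi(\hat F_{(\vecX,\vecZ),n})\bigr) \Rightarrow \Phi_F'(\calB^0_F),
\]
which is the asserted convergence; since the limit agrees with that of $\sqrt{n}(\tilde{\vecc}_n - \vecc)$ from Theorem~\ref{Th_CLT}, the bootstrap is outer $P$-a.s.\ consistent.

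Several points need care but amount to bookkeeping. The empirical measures $\hat F_{(\vecX,\vecZ),n}$ and $\hat F_{(\vecX,\vecZ),n}^*$ are discrete and hence do not belong to $\calF_2$; this is harmless because, exactly as in the proof of Theorem~\ref{Th_CLT}, the (bootstrap) delta method only requires Hadamard differentiability at the true $F\in\calF_2$ tangentially to $\calD_{\calF,0}$ with arbitrary cadlag approximating directions, which is the notion used throughout Section~\ref{Sec:PropThreshold}. One must also guarantee that the bootstrap estimator is eventually well defined: by the Glivenko--Cantelli theorem for the bootstrap, $\hat F_{(\vecX,\vecZ),n}^* \to F$ uniformly outer $P$-a.s., so $\hat\bfSigma_n^* \to \bfSigma(F)$, which is regular since $F\in\calF_2$, and $\hat p_k^* \to p_k$, whence by the strict admissibility inequality in Assumption (A3) the relevant quantities lie eventually in the required ranges. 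Measurability is handled throughout via outer expectations as in \cite{VaartWellner2023}.

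I expect the only step requiring genuine attention to be checking the precise hypotheses of the almost-sure bootstrap delta method --- tightness and separability of the limit process and the requirement that $\Phi_F'$ be defined and continuous on a subspace carrying the support of $\calB^0_F$ --- and matching them cleanly with the Hadamard differentiability established in Theorems~\ref{ThHadamard} and~\ref{ThResidualProcess}. Everything else is a transcription of the proof of Theorem~\ref{Th_CLT} with $\hat F_n$ replaced by its bootstrap counterpart.
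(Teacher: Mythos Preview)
Your proposal is correct and follows essentially the same route as the paper: represent both $\tilde c_{\text{prop}}$ and $\tilde c_{\text{prop}}^*$ as $T\circ\underline H$ evaluated at $\hat F_n$ and $\hat F_n^*$, invoke the outer-$P$-a.s.\ conditional weak convergence of the bootstrap empirical process $\sqrt{n}(\hat F_n^*-\hat F_n)\Rightarrow\calB_F^0$, and apply the bootstrap functional delta method using the Hadamard differentiability already established in Theorems~\ref{ThHadamard} and~\ref{ThResidualProcess}. Your write-up is in fact more explicit than the paper's about the bookkeeping (well-definedness via $\hat\bfSigma_n^*\to\bfSigma(F)$ and $\hat p_k^*\to p_k$, the domain issue for discrete e.c.d.f.s, and the tangent-space requirement), but none of this changes the argument.
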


\section{Example}
\label{Sec: Example}

To illustrate the approach, we analyze the Pima Indians diabetes dataset available from kaggle. It consists of $768$ observations from female Pima Indians (Akimel O'odham) and includes medical measurements relevant for the prediction and diagnosis of diabetes mellitus as well as the true diagnosis (diabetic/non-diabetic). The dataset is selected to study how diabetes can be predicted. Each observation corresponds to a woman with a non-diabetes glucose level, i.e. glucose 2h after a glucose tolerance test below $200$ mg/dl, at the examination, who either developed diabetes within the next five years or a  glucose-tolerance-test five or more years later failed to revealed diabetes mellitus. Measurements from a single examination are provided, \cite{PimaIndians1988}.  It is well established that the body mass index (BMI) is a severe risk factor for diabetes mellitus. Therefore, for purposes of illustration, we set up thresholds for the observed plasma glucose concentration, $X_t$,  adapted for the dichotomized BMI, $ Z_t $, in order to generate predictions and screen the population. The dataset allows to evaluate this approach, since the true diagnosis (diabetes within the next five years) is available. Keeping only observations with positive values, we are left with $ n = 752 $ observations, see Table~\ref{SummaryStats} for summary statistics. Although there is no gold standard for the design and evaluation of a screening test,  correctly detecting almost all true positives (diabetes cases), i.e. high sensitivity, as well as ensuring a sufficiently  small type I error of false positives among the true negatives (healthy woman), i.e. high specificity within the healthy population, can be seen as reasonable criteria. 

A $10\% $-high-risk class (high-risk BMI, HBMI) was defined by $ \{ Z_t > \hat q_{0.9}(Z) \} $, where $ \hat q_{0.9}(Z) $ stands for the sample $ 0.9 $-quantile of BMI, i.e., by the top decile, and the remaining cases represent the no-high-risk class. \color{black} Let us first discuss optimality properties for a significance level $ \alpha = 10\% $ and type II error rates $ \beta_1 = 1\% $ and $ \beta_2 = 2\%$ for $ \Delta = (1,4)  $ and $ \Sigma = (1,1) $. Replacing unkowns by their estimates, we get $ c_{\text{opt}}^*(z_1) = 1.674 $ and $ c_{\text{opt}}^*(z_2) = 1.946 $,  so that the upper bounds for candidate $ g_k$'s  are $ b_1 = \hat p_1 \Psi( c_{\text{opt}}^*(z_1) ) = 0.093$ and $ b_2 = \hat p_2 \Psi( c_{\text{opt}}^*(z_2) ) = 0.8511 $. Solving the problem to minimize $ g_1 $ under the constraints $ \matA \vecb $ yields the solution $ \hat g_1^* = 0.0489  $, $ \hat g_2^* = 0.851 $ with corresponding thresholds $ 128 $ and $ 179 $. \color{black}
The estimated proportional rule uses $ \hat g_1 = 0.011 $ and $ \hat g_2 = 0.889$. Since $ \hat g_1 \le b_1 = 0.093 $ and $ \hat g_2  \le b_2 = 0.8511$, for the specified alternative the proportional rule can be expected to have conditional type II error rates of $ 1\%$ and $2\% $, respecitively, and a marginal detection power of ca. $ 98.1\% $. The associated thresholds and resulting summary statistics are given in Table~\ref{MainTab2} discussed in greater detail below. 

Based on the residuals estimator using estimated conditional mean and standard deviation, the thresholds are estimated by $ \hat c_{n,\text{HBMI}} = -1.108 $ for the high-risk minority class and $ \hat c_{n,\text{No-HBMI}} = 2.462 $ for the majority class. Associated thresholds for the raw observations are given in the Table~\ref{MainTab2}. The bootstrap estimates of their standard deviations based on $ B = 5000 $ Monte-Carlo replicates  are $ 0.032 $ and $0.083 $, respectively. 

\subsection{Comparison and evaluation of monitoring rules}

Table~\ref{MainTab2} summarizes alarm rates, true positive and true negative rates when classifying the observations of the learning sample using the proportional rule, the optimal rule and, as comparisons, the constant rule and logistic regression. All  estimates given there are in-sample estimates. Below we report about a (bootstrap) simulation study to further evaluate the rules. For the proportional rule and the optimal rule $ 2  \times 2 $ contingency tables of  true positives (TP), false negatives (FN), false posities (FP) and true negatives (TN) are shown for both classes, see Table~\ref{ContingTab}, which allows the reader to calculate further evaluation metrics.

The proportional rule has an appealing high true positive rate (TPR, sensitivity) for high-risk cases (HBMI) and correctly identifies $ 97.9\%$ of all those women which will be diagnosed diabetes within the next five years. The true negative rate (TNR, specificity) for high-risk cases, is $ 36\% $. 
For the no high-risk class (No-HBMI) the rule correctly identifies practically almost all healthy women, as the specificity is $ 100\% $. The low sensitivity ($3.2\%$) among the no high-risk cases is, however, expected, since the rule is designed to achieve an overall false alarm rate of at most $ \alpha = 10\% $, and - by design - we have increased the sensitivity for the high-risk class at the cost of the sensitivity for the majority class.  


We also constructed an optimal threshold using the alternative given by $ \Delta(z_1) = \Delta(z_2) = 4 $, $ \Sigma(z_1) = \Sigma(z_2) = 1 $, for conditional type II error rates $ \beta_1 = 1\% $ and $ \beta_2 = 2\% $. For this specification, the resulting threshold for the high risk HBMI class is higher leading to an alarm rate of $ 56.6\% $ for that class and $ 5.8\% $ for the no high-risk cases, yielding an overall alarm rate of $ 10.9\% $. Sensitivity and specificity are both $ 75 \% $ for the high-risk HBMI class and $ 15\%$ and $ 98\% $ for the  No-HBMI class. 

	\begin{table}[ht]
		\centering\begin{tabular}{lrrrrrrr}  \hline & N & Mean & SD & min & max & prev. \\ 
			\hline 
			HBMI& 76 			& 135 & 31 & 67 & 199 & $0.63$  \\   
			No-HBMI & 676 & 121 & 30 & 44 & 198 &$0.32 $\\
			Overall & 752 & 122 & 31 & 44 & 199 & $0.35 $\\    \hline\end{tabular}
		\caption{Summary statistics of the plasma glucose level and (future) prevalence (prev.) of diabetes within the next five years after examination.}
		\label{SummaryStats}
	\end{table}
	
	\begin{table}[ht]
		\begin{small}
			\centering\begin{tabular}{|r|rrrr|rrrr|}  \hline 
				& \multicolumn{4}{c|}{Proportional Rule} & \multicolumn{4}{c|}{Constant Rule} 			\\
				& Thr & Alarm & TPR & TNR         & Thr & Alarm & TPR & TNR  \\   \hline 
				HBMI       & 100 & 0.855 & 0.979 & 0.36    & 180 & 0.092 & 0.15 & 1.00  \\   
				No-HBMI & 195 & 0.012 & 0.032 & 1.00   & 165 & 0.102  & 0.26 & 0.97 \\
				Overall &  			&  0.097 & 0.277 & 0.85 &           &  0.101 & 0.29 & 0.84 \\    \hline 
				& \multicolumn{4}{c|}{Optimal Rule} & \multicolumn{4}{c|}{Logistic Regression} \\
				HBMI  & 128  &    0.566  &     0.75    &    0.75 & -& 0.14 & 0.23 & 0.86 \\
				No-HBMI & 179     & 0.058   &    0.15       & 0.98 & -&	0.08 & 0.25 & 0.75 \\  
				Overall &    &    0.109   &    0.31   &    0.83&	 & 0.09 & 0.25 & 0.75   \\    \hline 
			\end{tabular}
			\caption{Thresholds, alarm rates, true positive rates (TPR, sensitivity) and true negative rates (TNR, specificity) for the high-risk class of high body mass index (HBMI) cases and the no high-risk class (No-HBMI).}
			\label{MainTab2}
		\end{small}
	\end{table}


		A comparison with the constant rule is insightful. Here, the threshold takes into account the estimates for level and dispersion of each class and thus uses class-specific thresholds, but it does not distribute sensitivity across classes. The summary in Table~\ref{MainTab2} shows that the constant alarm threshold is too high for the high-risk HBMI cases to yield a high true positive rate. Especially, the contingency table, see Table~\ref{ContingTab2}, reveals that there are only $7$ alarms in this class. As a consequence, the sensitivity (TPR) is only $ 15\% $, whereas the specificity (TNR) is $100\%$.  
		For the majority class of cases without high-risk (No-HBMI) the sensitivity is higher than for the proportional rule, but it nevertheless reaches only $25\% $. Similar as for the proportional rule, the specificity for this class is very high, $75\%$. 
		Note that the constant rule suffers from the common phenomenon that the minority class is not handled properly as it detects risk cases only rarely. A plausibe explanation based on the summary statistics is that these cases show higher glucose values  years before diabetes is eventually diagnosed, namely $ 135 $ on average compared to $ 121$ on average for the no high-risk HBMI class. This  leads to a large threshold, such that future diabetes cases can only be predicted with low probability. That phenomenon clearly shows that the conditional standardization of the classical approach is not sufficient, and  it is likely that it is present in other applications as well. The proportional rule effectively mitigates this effect by its inherent weighting mechanism. 
		
		\begin{table}[ht]\centering\begin{tabular}{rrrr}  \hline Diabetes & Alarm & No alarm & $\sum$ \\   \hline Yes  &  47 &   1 &  48 \\  
				No  &  18 &  10 &  28 \\ 
				$\sum$ &  65 &  11 &  76 \\    \hline\end{tabular} \qquad
			\centering\begin{tabular}{rrrr}  \hline Diabetes & Alarm & No alarm & $\sum$ \\  
				\hline Yes &  7 & 209 & 216 \\ 
				No &   1 & 459 & 460 \\ 
				$\sum$ &  8 & 668 & 676 \\    \hline\end{tabular}
			\caption{Proportional rule: Contingency tables for high BMI  class (HBMI, left) and no high BMI (No-HBMI, right).}
			\label{ContingTab}
		\end{table}

		\begin{table}[ht]\centering\begin{tabular}{rrrr}  \hline Diabetes & Alarm & No alarm & $\sum$ \\   \hline
				Yes &   7 &  41 &  48 \\  
				No  &   0 &  28 &  28 \\  
				$\sum$ &   7 &  69 &  76 \\    \hline\end{tabular} \qquad
			\begin{tabular}{rrrr}  \hline Diabetes & Alarm & No alarm & $\sum$ \\   \hline 
				Yes &  57 & 159 & 216 \\  
				No  &   12 & 448 & 460 \\  
				$\sum$ &  69 & 607 & 676 \\    \hline\end{tabular}
			\caption{Constant rule: Contingency tables for high BMI risk class (HBMI, left) and no high BMI No-HBMI, right).}
			\label{ContingTab2}
		\end{table}
		
		Lastly, we compared the results with a logistic regression, although such a comparison is difficult: A logistic regression fits a model for the probability to develop diabetes as a logit function of the regressors, and the resulting classification does not control the type I error rate of falsely deciding in favor of future diabetes. We used glucose level and an indicator of the event $ Z_t > \hat{q}_{0.9} $ as regressors. As common practice, an observation was classified as '1' if the predicted value exceeds $ 0.5 $, i.e. in this case an alarm was raised. The main characteristics are shown in Table~\ref{MainTab2}. For the data at hand, this rule maintains the type I error rate and has favorable true negative rates. However, it also suffers from low sensitivity to detect true positives among risk cases, which is only $ 21\% $ compared to $ 98\% $ for the proportional rule. 
		
		\subsection{Simulation assessment of population screening}
		
		To further evaluate the rules, the following simulation analysis was conducted, in order to get some insights into the accuracy when applying the approach to screen a large sample of a  population.  In step 1, a bootstrap sample of size $n$ from the data was drawn to estimate the thresholds. Then, in step 2 of the procedure, a further sample of size $N = 10,000$ was drawn, using a smooth bootstrap by convolving each resampled measurement with Gaussian noise with standard deviation $ 1.59 s N^{-1/5} $ where $s$ stands for the estimated standard deviation. This sample represents the population sample to be screened. By using a smoothed bootstrap we circumvent the issue that for a nonparametric resampling many individual observations occur multiple times in the drawn sample. Step 3 consists in  comparing the  glucose levels of these $N$ observations with the thresholds calculated in the first step and determining the associated false alarm rates. Averaging these estimates over $B = 5,000 $ replications of the above three steps yields a marginal false alarm rate of $ 9.715 \% $ and conditional false alarm rates of $ 85.187\%$ for the high-risk BMI class (No-HBMI) and $ 1.329\%$ for the no high-risk class. The simulation was also carried out for the optimal test. Here the conditional alarm rates are $ 56.39 \% $, $ 5.66\% $ and $ 10.73\% $. 
		These results suggest that the proprosed proportional rule  using estimated thresholds as well as the optimal one works reliable in practice.

		\appendix
		
		\section{Proofs}
		\label{Sec: Proofs}
		
		\subsection{Modified rule by dichotomizing categories}
		
If $ p_{i_0} := \min_j p_j \ge \alpha $, one may take category $ i_0 $ and fuse the remaining categories. If all probabilities $p_j $ are larger than $ \alpha$, one can proceed as follows: 
Assume that the categories are ordered such that $  p_1 \le \cdots \le p_K $. Let us fix some $ 1 \le k_0 < K $ and assign to the smallest $ k_0 $ categories a small probability $ p_{\min} $, and to the remaining large categories a probability $ p_{\max} >p_{\min}$, such that the condition
\begin{equation}
	\label{CondProbWeights}
	\frac{p_{\min}}{p_{\max}} \ge \frac{p_2+ \cdots + p_{k_0}}{p_1+ \cdots + p_{k_0}}
\end{equation}
holds. \color{black} Here, $ p_{\min} $ and $ p_{\max} $ need to be probabilities, but they serve as scores assigned to the small and large categories, respectively, and thus may differ from the true probabilities and may also be functions of $ p_1, \ldots, p_k $. \color{black} Specifically, for any given constant $ p_{\min} $ or smooth function of $ \vecp $ with $ p_{\min} \ge \alpha $, one may choose $ p_{\max} = p_{\min} \frac{p_1+ \cdots + p_{k_0}}{p_2+ \cdots + p_{k_0}} $ as a smooth function of $ \vecp $.  Now define
\[
\tilde{p}_k =  p_{\min} \varphi(k_0-k) + p_{\max} (1-\varphi)(k_0-k)
\]
and
\begin{equation}
	\label{ProportionalRuleMod}
	c_{\text{mod}}(z_k) = \Psi^{-1}\left( \frac{(1-\alpha) \tilde p_k}{\sum_{j=1}^K p_j \tilde p_j} \right), \qquad 1 \le k \le K.
\end{equation}
Here, $ \varphi $ is a $C^2$-smoothed version of the indicator $ \eins_{[0,\infty)} $ with $ \varphi(x) =  \eins_{[0,\infty)} $ for $ |x| \ge 1 $. 

\begin{lemma}
	\label{LemmaProportionalRuleMod}
	If $ p_1 \le \alpha $, $ \vecp > 0 $ (element-wise) and $ 0 < p_{\min} < p_{\max} < 1 $ are smooth functions of $ \vecp $ satisfying \eqref{CondProbWeights}, then $ c_{\text{mod}}(\cdot) $ is defined.
\end{lemma}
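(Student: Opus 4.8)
The plan is to show directly that, for every $k$, the argument of $\Psi^{-1}$ appearing in the definition of $c_{\text{mod}}(z_k)$ lies in $(0,1)$; by the discussion around \eqref{GeneralClass} this is exactly the admissibility condition, since with $g_k=(1-\alpha)p_k\tilde p_k/\sum_{j=1}^K p_j\tilde p_j$ one has $c_{\text{mod}}(z_k)=\Psi^{-1}(g_k/p_k)$, the identity $\sum_k g_k=1-\alpha$ holds by construction, and $\vecp>0$ forces each $\tilde p_k>0$ and hence $g_k/p_k>0$. So the whole statement reduces to the single upper bound $(1-\alpha)\tilde p_k<\sum_{j=1}^K p_j\tilde p_j$ for all $k$.

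First I would reduce this to a scalar inequality. By the definition of $\varphi$ one has $\tilde p_j=p_{\min}$ for $j<k_0$, $\tilde p_{k_0}=p_{\min}\varphi(0)+p_{\max}(1-\varphi(0))\in[p_{\min},p_{\max}]$, and $\tilde p_j=p_{\max}$ for $j>k_0$; in particular $\tilde p_k\le p_{\max}$ for every $k$, so it suffices to prove $(1-\alpha)p_{\max}<\sum_{j=1}^K p_j\tilde p_j$. Setting $S_1=\sum_{j=1}^{k_0}p_j$ and using $\tilde p_j\ge p_{\min}$ on $j\le k_0$, $\tilde p_j=p_{\max}$ on $j>k_0$, together with $\sum_j p_j=1$, gives $\sum_j p_j\tilde p_j\ge p_{\min}S_1+p_{\max}(1-S_1)$, so the target inequality becomes $(p_{\max}-p_{\min})S_1<\alpha\,p_{\max}$.

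This is where the two hypotheses enter: rearranging \eqref{CondProbWeights} (multiply through by $p_{\max}S_1$ and use $p_2+\cdots+p_{k_0}=S_1-p_1$) yields $(p_{\max}-p_{\min})S_1\le p_1 p_{\max}$, and $p_1\le\alpha$ then gives $(p_{\max}-p_{\min})S_1\le\alpha p_{\max}$. The main obstacle is upgrading this to the \emph{strict} inequality needed for the argument of $\Psi^{-1}$ to stay below $1$: equality would simultaneously force $p_1=\alpha$, equality in \eqref{CondProbWeights}, and $\tilde p_{k_0}=p_{\min}$. In the generic case $p_1<\alpha$ strictness is immediate; in the boundary case one notes that $k_0=1$ makes the right-hand side of \eqref{CondProbWeights} vanish, so it holds strictly (as $p_{\min}/p_{\max}>0$), while for $k_0\ge2$ the smoothing may be taken with $\varphi(0)<1$, making $\tilde p_{k_0}>p_{\min}$ and hence $\sum_j p_j\tilde p_j$ strictly larger than $p_{\min}S_1+p_{\max}(1-S_1)$. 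Collecting the cases yields $(1-\alpha)\tilde p_k<\sum_j p_j\tilde p_j$ for all $k$, so every argument of $\Psi^{-1}$ lies in $(0,1)$ and $c_{\text{mod}}(\cdot)$ is well-defined.
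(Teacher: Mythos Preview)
Your argument is correct and follows essentially the same route as the paper: both verify that the argument of $\Psi^{-1}$ stays below $1$ by combining the rearranged form $(p_{\max}-p_{\min})S_1\le p_1 p_{\max}$ of condition \eqref{CondProbWeights} with the hypothesis $p_1\le\alpha$. Your reduction to the single worst case $\tilde p_k=p_{\max}$ is a minor streamlining of the paper's separate treatment of $k\le k_0$ versus $k>k_0$, and you are more explicit than the paper about the borderline case $p_1=\alpha$ where strictness requires an extra observation.
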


		\subsection{Proofs of Section~\ref{Sec: Method}}

		\begin{proof}[Proof of Lemma~\ref{LemmaProportionalRuleMod}] We need to show that the argument of $ \Psi^{-1} $ in the definition of $ c_{\text{mod}}(\cdot) $ is less than $1$. Note that  $ \sum_j p_j \tilde p_j = p_{\min}(p_1 + \cdots + p_{k_0}) + p_{\max}(p_{k_0+1}+ \cdots + p_K) $. Since $ 1- \alpha \le p_2 + \cdots + p_K $, the numerator of the argument can be bounded by
			\[
			(1-\alpha) \tilde p_k \le \left\{ \begin{array}{cc} (p_2 + \cdots + p_K) p_{\min}, & \qquad k \le k_0, \\
				(p_2 + \cdots + p_K) p_{\max} & \qquad k > k_0 \end{array} \right..
			\]
			Clearly, $ \tilde p_k / \sum_j p_j \tilde p_j < 1$, if $ k \le k _0 $, and since \eqref{CondProbWeights} implies $  p_{\min}( p_1+ \cdots + p_{k_0} ) \ge p_{\max} (p_2+ \cdots + p_{k_0}) $, we obtain for $ k > k_0 $ 
			\begin{align*}
				\frac{\tilde p_k}{\sum_j p_j \tilde p_j} & \le \frac{(p_2 + \cdots + p_K) p_{\max}}{p_{\min}(p_1 + \cdots + p_{k_0}) + p_{\max}(p_{k_0+1}+ \cdots + p_K)}  \\
				& \le \frac{(p_2 + \cdots + p_K) p_{\max}}{p_{\max} (p_2+ \cdots + p_{k_0}) + p_{\max}(p_{k_0+1}+ \cdots + p_K)} \\
				& = p_2 + \cdots + p_K \le  1,
			\end{align*}
			\color{black} such that $ \frac{(1-\alpha)\tilde p_k}{\sum_j p_j \tilde p_j} < 1 $. \color{black}
		\end{proof}
		
		\subsection{Proofs of Section~\ref{Sec: Optimality}}

		\begin{proof}[Proof of Theorem~\ref{ThOptimality}]
			Under model \eqref{H1Model} it holds
			\[
			\frac{X_t- \mu(Z_t)}{\Sigma(Z_t)\sigma(Z_t)} - \frac{\Delta(z)}{\Sigma(z)} \sim \Psi,
			\]
			and therefore the detection probability to decide in favor of $ H_1 $, $p_d $, can be calculated as 
			\begin{align*}
				p_d & = P_1\left( U_t > c(Z_t) \right) \\
				&= \sum_{k=1}^K P\left( \frac{X_t- \mu(Z_t)}{\Sigma(z_k)\sigma(Z_t)} - \frac{\Delta(z_k)}{\Sigma(z_k)} > \frac{c(z_k)-\Delta(z_k)}{\Sigma(z_k)}  \right) p_k \\
				& = \sum_{k=1}^K \left[1-\Psi\left(  \frac{c(z_k)-\Delta(z_k)}{\Sigma(z_k)} \right) \right] p_k,
			\end{align*}
			where the term in brackets is the conditional power given $ Z_t = z_k $. 
			Consequently, the procedure attains at most a type II error rate $ \beta \in (0,1) $ and thus a detection power of at least $ 1-\beta $ to detect the alternative $ H_1 $, if and only if
			\begin{equation}
				\label{H1IneqDiscr}
				\sum_{k=1}^K \Psi\left(  \frac{c(z_k)-\Delta(z_k)}{\Sigma(z_k)}\right) p_k \le \beta.
			\end{equation}
			By monotonicity, any function $ c(\cdot) $ such that 
			$ \frac{c(z_k)-\Delta(z_k)}{\Sigma(z_k)} \le \Psi^{-1}(\beta) $ solves \eqref{H1IneqDiscr}. Further, any function $ c( \cdot) $ with $ \frac{c(z_k)-\Delta(z_k)}{\Sigma(z_k)} \le \Psi^{-1}(\beta_k) $ has conditional type II error rate at most $ \beta_k $, and then $ \sum_{k=1}^K p_k \beta_k \le \beta $ ensures a marginal type II errror rate of at most $ \beta $.  
			This shows (i). Assertion (ii) follows by combining (i) and statement (iv) of  \cite[Theorem~1]{SteRafa2024}, cf. \eqref{GeneralClass}, where $ c(z_k) =\Psi^{-1}( g_k/p_k ) $ for all $ p_k > 0 $ and $ c(z_k) = 0 $ if $ p_k = 0$. Then \eqref{H1IneqDiscr} holds, if
			\[ g_k \le p_k \Psi( \Delta(z_k) + \Psi^{-1}(\beta) \Sigma(z_k) ) = p_k \Psi( c_{\text{opt}}^*(z_k,\beta) ). \]  
			We can conclude that a sufficient condition to satisfy the type I and type II error rate constraints is
			as follows: For all $ 1 \le k \le K $ with $ p_k > 0 $
			\[
			0 < g_k < p_k, \qquad  \text{as well as} \qquad g_k \le p_k  \Psi( c_{\text{opt}}^*(z_k,\beta)  ), 
			\] 	 
			and $g_k = 0 $ for all $1 \le k \le K $ with $ p_k = 0 $. Noting that $ 0 \le g_k \le p_k  \Psi( c_{\text{opt}}^*(z_k)  ) $ implies $ g_k < p_k$  when $ p_k > 0 $  and $ g_k = 0 $ if $ p_k = 0$ establishes the result. The claims concerning the conditional detection power follow using similar arguments as in (i).
		\end{proof}

		\subsection{Proofs of Section~\ref{Sec:PropThreshold}}
		
		\begin{proof}[Proof of Lemma~\ref{propHadamard}] Fix $ \theta \in \calF_\psi $, let $ \{ \Delta, \Delta_n \} \subset \calD_{\calF} $  (resp. $\calD_{\calF}^{vec} $)  be a convergent sequence with limit $ \Delta \in \calD_{\calF,0}$ and let $ 0 <t_n \to 0 $ be an arbitrary sequence of positive reals. Put $ \psi_n = \theta + t_n \Delta_n $ and $ \theta_n = \theta $. Note that, by definition of $\psi_n  $, we have $ \rho( \psi_n - \theta, 0 ) = \rho( t_n \Delta_n, 0)  = t_n \rho( \Delta_n, 0) $, since $ \rho $ is absolutely homogenous, so that $ t_n = \frac{\rho(\psi_n - \theta)}{\rho(\Delta_n,0)}  $. Moreover, using the triangle inequality and the assumption that $ \calF_\psi $ as well as $ \calD $ are equipped with the metric $ \rho $,
			\[ 
			\rho(\psi_n, \theta ) = \rho( \theta + t_n \Delta_n, \theta) \le t_n \rho( \Delta_n, 0 ) 
			\le t_n ( \rho(\Delta, 0) + \rho( \Delta_n, \Delta) ) \to 0,   
			\]
			as $ n \to \infty $. Further, since $ T'_\theta $ is continuous and linear on  $ \calD_\calF $  (resp. linear on $\calD_{\calF}^{vec} $ and continuous for sequences in $ \calD_\calF^{vec} $ with limit in $ \calD_{\calF,0} $)  and thus bounded, we have $  T'_\theta( \Delta ) = T'_\theta(\Delta_n) + o(1) $. By continuous Fr\'echet differentiability of $ T $ at $ \theta $ (tangentially to $ \calD_{\calF,0} $)  with Fr\'echet derivative $ T'_F(\cdot) $, we now obtain
			\begin{align*}
				\frac{T(\theta+t_n \Delta_n) - T(\theta)}{t_n} - T'_\theta( \Delta ) 
				& = \frac{T(\psi_n) - T(\theta_n) - t_n T'_\theta( \Delta_n)}{\rho(\psi_n - \theta,0)} \rho(\Delta_n,0) + o(1) \\
				& = \frac{T(\psi_n) - T(\theta_n) - T'_\theta( t_n \Delta_n)}{\rho(\psi_n - \theta,0)} \rho(\Delta_n,0) + o(1)  \\
				& = \frac{T(\psi_n) - T(\theta_n) - T'_\theta( \psi_n - \theta_n) }{\rho(\psi_n, \theta)} \rho(\Delta_n,0) + o(1) \\
				& = o(1),
			\end{align*}
			if $ n \to \infty $, where the second equation uses the linearity of $ T'_\theta $ and the third one $ \theta = \theta_ n $, $ \rho(\Delta_n,0) = O(1) $, the translation invariance of $ \rho $ and the definition of continuous Fr\'echet  differentiability.
		\end{proof}

		\begin{proof}[Proof of Theorem~\ref{ThHadamard}]
			We decompose the functional $ T_k(F) $
			\[
			T_k(F) = \Omega \circ ( F, S ( F  )) = \Omega \circ R( F ),
			\]
			where 
			\[ \Omega : \mathcal{F} \times (0,1) \to \R, \qquad  \Omega( F, p ) = F_U^{-1}( p ), \qquad  (F,p) \in \mathcal{F} \times (0,1), 
			\] 
			$ R := (\id, S) : \calF \to \calF \times \R $ is defined by
			\[ 
			R( F ) = ( F, S( F ) ), \qquad  F \in \calF, 
			\]
			and 
			\[
			S: \calF \to (0,1), \qquad  S( F ) = \frac{c_\alpha D_{\veca_k}^{\vecb_k} F_\vecZ}{\sum_j (D_{\veca_j}^{\vecb_j} F_\vecZ)^2 }, \qquad  F  \in \calF.
			\]
			Note that Assumption (A3) ensures that $S$ maps to $(0,1)$. 
			The claim follows, if we show that each of these functionals is Hadamard differentiable and that the assumptions of the chain rule are satisfied.   
			
			1) Recall that the difference operator $ D_{\veca}^{\vecb} g $ is defined for any function $g$ on $ \R^q $ and is linear in $ g $. Since numerator and denominator of $ S(F) $ are Hadamard differentiable and the denominator is positive, $S(F) $ is Hadamard differentiable with derivative $ S' : \mathcal{D}_{\calF,0} \to \R $  at  $ F \in \calF$ given by 
			\[ 
			S'_{F}(\Delta) = c_\alpha \frac{D_{\veca_k}^{\vecb_k} \Delta_\vecZ  \sum_j (D_{\veca_j}^{\vecb_j} F_\vecZ)^2 - 2 D_{\veca_k}^{\vecb_k} F_\vecZ \sum_j D_{\veca_j}^{\vecb_j} F_\vecZ D_{\veca_j}^{\vecb_j} \Delta_\vecZ  }{ \left( \sum_j (D_{\veca_j}^{\vecb_j} F_\vecZ)^2 \right)^2 } 
			\] 
			for $ \Delta = (\Delta_U,\Delta_\vecZ) \in \mathcal{D}_{\calF} $. Note that $ S'_{F}(\Delta) $ does not depend on $ \Delta_U $.
			
			2) Since the coordinate maps of $R$ are Hadamard differentiable, $R$ inherits this property and has derivative at $ G \in \calF $ 
			\[
			R_G'(\Delta) = (\Delta, S'_{G}(\Delta) ).
			\]
			for $ \Delta \in \calD_{\calF,0} $. 
			
			3) Further, by definition of $ \Omega(F,p) = F_U^{-1}(p)$ as the inversion of the 1st marginal c.d.f., $F_U $, of $F = F_{(U,\vecZ)} $ evaluated at $ p$, we have 
			\[
			\Omega'_{(F,p)}( \Delta ) = \begin{pmatrix}  \frac{\partial F_U^{-1}(p)}{ \partial F } , & \frac{\partial F_U^{-1}(p)}{ \partial p} \end{pmatrix} 
			\begin{pmatrix} (\Delta_U,\Delta_\vecZ) \\ \Delta p \end{pmatrix} =
			- \frac{\Delta_U \circ F_U^{-1}(p)}{f_U ( F_U^{-1}( p ) )} + \frac{\Delta p}{f_U( F_U^{-1}(p))  }  
			\]
			for $ \Delta = ((\Delta_U, \Delta_\vecZ), \Delta p) \in \mathcal{D}_{\calF \times [-1,1],0} $.
			
			4) Lasty, apply the chain rule to the maps $ \Omega : \calF \times [0,1] \to \R $ and  $R = (\id,S)  : \calF \to \calF \times \R $.  We obtain for the derivative, $ T'_{k,F} $, of $ T_k $ at $ F \in \calF $ 
			\begin{align*}
				T'_{k,F}(\Delta ) &=  (\Omega \circ R)'_{F}( \Delta ) \\
				&= \Omega'_{R(F )}(  R'_{F}(\Delta) )   \\
				&= 
				\Omega'_{(F, S(F))}( (\Delta, S'_{F}( \Delta )  ) ) \\
				& = - \frac{(\Delta_U \circ F_U^{-1}( S(F)))} {f_U ( F_U^{-1}( S(F) )} + \frac{S'_{F}( \Delta )}{f_U( F_U^{-1}( S(F) )  } \\
				& = - \frac{\Delta_U \circ F_U^{-1}\left( \frac{c_\alpha D_k F_\vecZ}{\sum_j (D_j F_\vecZ)^2 } \right) }{f_U \left( F_U^{-1}\left( \frac{c_\alpha D_k  F_\vecZ}{\sum_j (D_j F_\vecZ)^2 } \right) \right)} +  c_\alpha \frac{D_k  \Delta_\vecZ  \sum_j (D_j F_\vecZ)^2 - 2 D_k  F_\vecZ \sum_j D_j  F_\vecZ D_j  \Delta_\vecZ  }{ f_U\left( F_U^{-1}\left(  \frac{c_\alpha D_k F_\vecZ}{\sum_j (D_j  F_\vecZ)^2 }\right)\right)  \left( \sum_j (D_j F_\vecZ)^2 \right)^2 } 
			\end{align*}
			for any direction $ \Delta = (\Delta_U, \Delta_\vecZ) \in \calD_{\calF,0} $,
			where $ D_k F_\vecZ = D_{\veca_k}^{\vecb_k} F_\vecZ $. 
			
			The second assertion now follows easily.  By Assumption (A4), the empirical process 
			$ \sqrt{n}( \hat F_n - F ) $ converges weakly to the $F$-Brownian bridge $ \calB_F^0 $ which has marginal Brownian bridges $ \calB^0_U $ and $ \calB^0_\vecZ $, such that 
			\[
			\sqrt{n}( \hat{F}_{U,n}  - F_U) \Rightarrow \calB_U^0, \qquad 
			\sqrt{n}( \hat{F}_{\vecZ,n}  - F_\vecZ) \Rightarrow \calB_\vecZ^0,
			\]
			as $ n \to \infty $. Therefore, the functional delta method for Hadamard differentiable functionals yields 
			\[
			\sqrt{n}( T_k(\hat F_n) - T_k(F) ) \Rightarrow T'_{k,F}( \calB^0_F ) 
			\]	
			where $ T'_{k,F}( \calB^0_F )  $ equals
			\[
			   - \frac{\calB_U^0 \circ F_U^{-1}\left( \frac{c_\alpha D_{\veca_k}^{\vecb_k} F_\vecZ}{\sum_j (D_{\veca_j}^{\vecb_j} F_\vecZ)^2 } \right) }{f_U \left( F_U^{-1}\left( \frac{c_\alpha D_{\veca_k}^{\vecb_k} F_\vecZ}{\sum_j (D_{\veca_j}^{\vecb_j} F_\vecZ)^2 } \right) \right)} +  c_\alpha \frac{ D_{\veca_k}^{\vecb_k} \calB_\vecZ^0   \sum_j (D_{\veca_j}^{\vecb_j} F_\vecZ )^2- 2 D_{\veca_k}^{\vecb_k} F_\vecZ \sum_j D_{\veca_j}^{\vecb_j} F_\vecZ D_{\veca_j}^{\vecb_j} \calB_\vecZ^0   }{ f_U\left( F_U^{-1}\left(  \frac{c_\alpha D_{\veca_k}^{\vecb_k} F_\vecZ}{\sum_j (D_{\veca_j}^{\vecb_j} F_\vecZ )^2}\right)\right)  \left( \sum_j (D_{\veca_j}^{\vecb_j} F_\vecZ)^2 \right)^2 }. 
			\]
			Now, the second statement follows  form the Cramer-Wold device and the above results.
		\end{proof}

		\subsection{Proofs of Section~\ref{Sec:Residuals}}
		
		The proof of Theorem~\ref{ThResidualProcess} uses the following auxiliary results from matrix calculus.
		
		\begin{lemma}
			\label{Hilfslemma}
			(i) Let $ \matA, \matB $ be square $N$-dimensional matrices. Then
			\begin{align}
				\label{F1}
				\vecop \matA \matB &= (\matid_N \otimes \matA ) \vecop \matB, \\
				\label{F2}
				\vecop \matB \matA &=  (\matA^\top \otimes \matid_N) \vecop \matB.
			\end{align}
			(ii) Let $ \matC $ be regular square $N$-dimensional matrix. Then a solution of the equation
			\[
			\matC \matD + \matD  \matC = \matid_N
			\]
			is given by
			\[
			\matD = \vecop^{-1}\left(  (\matid_N \otimes \matC + \matC^\top \otimes \matid_N )^{-1} \vecop \matid_N\right),
			\]
			where $ \vecop^{-1} : \R^{N^2} \to \R^{N \times N}$ denotes the inverse vectorization.
		\end{lemma}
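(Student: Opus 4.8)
My plan is to prove (i) by a direct column-by-column computation from the definition of $\vecop$, and then to obtain (ii) by applying $\vecop$ to the matrix equation and inverting the resulting Kronecker sum.

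For part (i), recall that $\vecop$ stacks the columns of a matrix, and write $\vece_1, \dots, \vece_N$ for the standard basis of $\R^N$. The $k$-th column of $\matA\matB$ is $\matA\matB\vece_k = \matA(\matB\vece_k)$, i.e.\ $\matA$ applied to the $k$-th column of $\matB$; stacking these columns gives $\vecop(\matA\matB) = \diag(\matA, \dots, \matA)\,\vecop\matB = (\matid_N \otimes \matA)\,\vecop\matB$, which is \eqref{F1}. For \eqref{F2}, the $k$-th column of $\matB\matA$ equals $\matB(\matA\vece_k) = \sum_{l=1}^N A_{lk}\,(\matB\vece_l)$, a linear combination of the columns of $\matB$; on the other hand the block-$k$ component of $(\matA^\top \otimes \matid_N)\vecop\matB$ is $\sum_{l=1}^N (\matA^\top)_{kl}(\matB\vece_l) = \sum_{l=1}^N A_{lk}(\matB\vece_l) = \matB\matA\vece_k$, so the two sides agree block by block, giving \eqref{F2}. (Both are the special cases $\matC = \matid_N$ and $\matA = \matid_N$ of the standard identity $\vecop(\matA\matB\matC) = (\matC^\top \otimes \matA)\vecop\matB$, which one could instead quote outright.)

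For part (ii), I would apply $\vecop$ to both sides of $\matC\matD + \matD\matC = \matid_N$. By \eqref{F1} with $\matA = \matC$, $\matB = \matD$ one gets $\vecop(\matC\matD) = (\matid_N \otimes \matC)\vecop\matD$, and by \eqref{F2} with $\matB = \matD$, $\matA = \matC$ one gets $\vecop(\matD\matC) = (\matC^\top \otimes \matid_N)\vecop\matD$. Adding, $(\matid_N \otimes \matC + \matC^\top \otimes \matid_N)\vecop\matD = \vecop\matid_N$. Provided the Kronecker sum $\matid_N \otimes \matC + \matC^\top \otimes \matid_N$ is invertible, one solves for $\vecop\matD$ and applies $\vecop^{-1}$ to arrive at the asserted formula; since $\vecop$ is a linear bijection and all steps are reversible, this is indeed a solution (in fact the unique one) of the original equation.

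The step I expect to need care with is the invertibility of $\matid_N \otimes \matC + \matC^\top \otimes \matid_N$: regularity of $\matC$ by itself does not guarantee it, because the spectrum of this Kronecker sum consists of the sums $\lambda_i + \lambda_j$ over the eigenvalues $\lambda_i$ of $\matC$ (the eigenvalues of $\matC^\top$ coinciding with those of $\matC$), so one needs $\lambda_i + \lambda_j \neq 0$ for all $i,j$. In the application of the lemma, $\matC$ is a symmetric positive definite matrix square root of a covariance matrix, for which $\lambda_i + \lambda_j > 0$ always holds, so invertibility, and hence existence and uniqueness of $\matD$, is automatic; I would either strengthen the hypothesis to positive definiteness in the statement or explicitly note that this is the setting in which the lemma is invoked.
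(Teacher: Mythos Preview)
Your proof is correct and follows essentially the same route as the paper: a direct column-wise verification of \eqref{F1} and \eqref{F2}, followed by applying $\vecop$ to $\matC\matD+\matD\matC=\matid_N$ and solving the resulting linear system. Your additional remark that regularity of $\matC$ alone does not force invertibility of the Kronecker sum $\matid_N\otimes\matC+\matC^\top\otimes\matid_N$ (one needs $\lambda_i+\lambda_j\neq 0$ for all eigenvalue pairs) is a valid sharpening that the paper leaves implicit; as you note, in the intended application $\matC=\bfSigma(F)^{1/2}$ is symmetric positive definite, so the issue does not arise.
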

		
		\begin{proof}
			Let $ \matB = (\vecb_1, \ldots, \vecb_N) $, where $ \vecb_i $ are the columns of $ \matB $, so that $ \matA \matB $ has columns $ \matA\vecb_i $. Thus, $ \vecop \matA \matB $ is the vector that stacks $ \matA \vecb_1, \ldots, \matA \vecb_N $, and it follows that
			\[
			(\matid_N \otimes \matA ) \vecop \matB = \begin{pmatrix} \matA & & \\ & \ddots & \\ & & \matA  \end{pmatrix} \begin{pmatrix} \vecb_1 \\\vdots \\ \vecb_N \end{pmatrix} = \vecop (\matA \matB).
			\]
			Further, since $  \matA^\top \otimes \matid_N  = ( a_{ji} \matid_N)_{1 \le i \le N \atop 1 \le j \le N} $ is a block-matrix with square $N\times N$-block $ a_{ji} \matid_N $ in row $i$ and column $j$, and since 
			$ \vecop \matB $ is a $ N \times 1 $-block matrix with blocks $ \vecb_i $ in row $i$, the $i$th element of $ \matA^\top \otimes \matid_N  \vecop \matB $ is given by
			$  (a_{1i} \matid_N, \ldots, a_{Ni} \matid_N  ) \vecop \matB =  a_{1i} \vecb_1+a_{2i} \vecb_2 + \cdots + a_{Ni} \vecb_N $. The latter coincides with the $i$th block of $ \vecop \matB \matA $, which is given by  $ \matB \veca_i = \sum_{k=1}^N \vecb_k a_{ik} $ (the linear combination of the columns of $ \matB $ with coefficients $ a_{1i}, \ldots, a_{Ni} $). Hence \eqref{F2} follows. 
			
			To establish the second assertion, observe that by \eqref{F1} and \eqref{F2}
			\begin{align*} 
				\matC \matD + \matD  \matC = \matid_N & \Leftrightarrow \vecop \matC \matD + \vecop \matD \matC = \vecop \matid_N \\
				& \Leftrightarrow (\matid_N \otimes \matC + \matC^\top \otimes \matid_N )\vecop \matD = \vecop \matid_N.
			\end{align*} 
			Solving the last equation for $\vecop \matD $ gives 
			$ \vecop \matD =  (\matid_N \otimes \matC + \matC^\top \otimes \matid_N )^{-1}  \vecop \matid_N$. Applying $ \vecop^{-1} $ establishes the result.
		\end{proof}
		
		Let us start by studying the differentiability of the functionals $ \bfmu(F) $ and $ \bfSigma(F) $. This requires to elaborate some differentials of matrix-valued functions on matrix spaces. Let us recall the following notions and facts, \cite{MagnusNeudecker1999}. A $ \R^{m\times p}$-valued function $ f(\matX) $ from a domain $ A \subset \R^{n\times q'} $ is differentiable at $ \matX_0  \in A $ if for all small $ \bfDelta \in \R^{n \times q'} $ such that $ \matX_0 + \bfDelta \in A $ there exists a $ mp \times nq' $ matrix $ f'_{\matX_0} $ such that
		\begin{equation}
			\label{LinApprox}
			\vecop f(\matX) - \vecop f( \matX_0 ) = f'_{\matX_0} \vecop \bfDelta + o( \| \bfDelta \| ), \qquad \bfDelta \to \matnull.
		\end{equation}
		Then $ f'_{\matX_0} $ is called differential of $f$ at $ \matX_0$ and also denoted $ D f(\matX_0) $. Note that  $ D f(\matX_0) $ is the usual Jacobian matrix at $ \matX_0 $ of the vectorized function $ f( \vecop \matX )  = \vecop f(\matX) $ with respect to the variables $ \vecop \matX$. When applying the chain rule, one composes differentials and tends to write formal expressions such as $ D h(g(\matX_0)) Dg(\matX_0) $. Here, it is important to note that the differential $ Df(\matX_0) $, a matrix of dimension $ mp \times nq'$,  operates on the $ n \times q' $ matrix $ \bfDelta $ via the vectorization operator in view of \eqref{LinApprox}, and we shall make this explicit in our notation. 
		
		In view of the definition of $ \underline{H} $ we need to handle differentials of patterned functions where only the upper left block has non-vanishing derivatives. Thus, consider the embedding  $ \mathcal{E} : \R^{n \times q'} \to \R^{N \times Q} $, 
		\[
		\mathcal{e}  \matX = \begin{pmatrix} \matX & \vecnull_{Q-q'}^\top \\ \vecnull_{N-n} & \vecnull_{(N-n)\times (Q-q')} \end{pmatrix}
		\] 
		of a $ n \times q' $ dimensional matrix $ \matX $ into  $ \R^{N \times Q} $.  Consider the differentiable function 
		\[ 
		\tilde f: \mathcal{e} A \to \R^{m \times p}, \qquad \tilde f( \tilde \matX ) = f( \matX ),  \tilde \matX \in \R^{N\times Q},
		\] 
		for some differentiable function $ f: A \to \R^{m \times p} $, $ A \subset \R^{n\times q'}$, (as above), where  $ \matX $ is the upper left $ N \times Q $ submatrix of $ \tilde \matX $. Then $ \tilde f( \mathcal{e} \matX ) = f( \matX ) $ for all $ \matX \in \R^{n \times q'} $,
		all partial derivatives of $ \tilde{f} $ with respect to variables $ X_{ij} $, $ 1 \le i \le n,  1 \le j \le q' $, coincide with those of $ f $, and the partial derivatives with respect to $ X_{ij} $, $ i > n $ or $ j >q' $, vanish.
		Therefore, the differential $ D \tilde f( \tilde \matX_0 ) \in \R^{mp \times NQ} $, i.e. the Jacobian at $ \tilde \matX_0 \in \R^{N\times Q }$ of the vectorized function $ f( \vecop \tilde \matX ) $, $ \tilde \matX \in \R^{N \times Q} $, is completely determined by $ D f( \vecop \matX_0 ) \in \R^{mp \times nq'}$, where $ \matX_0 $ is the upper left $ n \times q' $ submatrix of $ \matX_0 $,
		and there exists a unique map $ \imath : \R^{mp \times nq'} \to \R^{mp \times N Q} $,  such that 
		\[
		\tilde f'_{\tilde \matX_0} = D \tilde f( \tilde \matX_0 ) = \imath D f( \matX_0 ). 
		\]
		Then for all small $ \bfDelta \in \R^{N \times Q} $ 
		\[
		\vecop \tilde f( \tilde \matX ) - \vecop \tilde f( \tilde \matX_0  ) 
		=  \imath D f( \matX_0 ) \vecop \bfDelta + o( \| \bfDelta \| ).
		\]
		In particular, in this situation the linear approximation of $ f( \matX) $ is simply lifted to the embedding Euclidean space, and the relevant information is given by $ D f(\matX_0) \in \R^{mp  \times nq'} $ and directions $ \bfDelta \in \R^{n \times q'} $. 
		
		\begin{lemma}
			\label{LemmaDerivs}
			(i) $ \bfmu(F) $ and $\underline{\bfmu}(F) $ are Hadamard differentiable at $ F \in \calF $ tangentially to $ \calD_{\calF,0}$ with derivatives
			\[
			\bfmu'_F(\Delta) = \int_{-\bftau}^{\bftau} u \, d \Delta(u), \qquad \Delta \in \calD_{\calF,0}.
			\]
			and $ \underline{\bfmu}'_F(\Delta) = ( \bfmu'_F(\Delta)^\top, \vecnull_q^\top)^\top$. \\
			(ii) $ \bfSigma(F) $ is Hadamard differentiable at $ F \in \calF $ tangentially to $ \calD_{\calF,0} $ with derivatives
			\[
			\bfSigma'_F( \Delta ) = \int_{-\bftau}^{\bftau} u u^\top \, d\Delta(u) - \int_{-\bftau}^{\bftau}  u \, d \Delta(u)  \bfmu(F) ^\top - \bfmu(F) \int_{-\bftau}^{\bftau} u^\top \, d \Delta(u), \qquad \Delta \in \calD_{\calF,0},
			\]
			(iii) The matrix differential of $ \bfSigma(F) \mapsto \bfSigma(F)^{1/2} $, ensuring that 
			\[ 
				\vecop (\bfSigma(F) + \Delta )^{1/2} - \vecop \bfSigma(F)^{1/2} = D \bfSigma(F)^{1/2} \vecop \Delta + o(\| \Delta \|),
			\]
			is given by
			\begin{equation}
				\label{DefDifferentialSigmaRoot}
				D \bfSigma(F)^{1/2} =  \vecop^{-1}\left(  (\matid_n \otimes \matA^{1/2} + \matA^{\top/2}  \otimes \matid_n )^{-1} \vecop \matid_N \right),
			\end{equation}
			The map $ F \mapsto \bfSigma(F)^{1/2} $, $ F \in \calF_2 $, is Hadamard differentiable at $ F \in \calF_2 $ tangentially to $ \calD_{\calF_2}  $ with derivative
			\begin{equation}
				\label{DefDifferentialS}
				(\bfSigma^{1/2})'_F( \Delta ) = D \bfSigma(F)^{1/2} \vecop \bfSigma'_F( \Delta ), \qquad \Delta \in \calD_{\calF}'
			\end{equation}
			(iv) If $ \bfSigma(F) > 0 $, then the matrix differential of $ \bfSigma(F) \mapsto \bfSigma(F)^{-1/2} $, ensuring that 
			\[ \vecop (\bfSigma(F) + \Delta )^{-1/2} - \vecop \bfSigma(F)^{-1/2} = D \bfSigma(F)^{-1/2} \vecop \Delta + o(\| \Delta \|),
			\]
			is given by
			\begin{equation}
				\label{DefDifferentialSigmaRootInverse}
				D \bfSigma(F)^{-1/2} = - (\bfSigma(F)^{-1/2} \otimes \bfSigma(F)^{-1/2} ) \vecop^{-1}\left(  ( \matid_n \otimes \matA^{1/2} + \matA^{\top/2}  \otimes \matid_n )^{-1} \vecop \matid_N \right).
			\end{equation}
			The map $ F \mapsto \bfSigma(F)^{-1/2} $, $ F \in \calF_2 $, is Hadamard differentiable at $ F \in \calF_2 $ tangentially to $ \calD_{\calF_2,0}  $ with derivative
			\begin{equation}
				\label{DefDifferentialInverseRootS}
				(\bfSigma^{-1/2})'_F( \Delta ) = D \bfSigma(F)^{-1/2} \vecop \bfSigma'_F( \Delta ), \qquad \Delta \in \calD_{\calF_2,0}.
			\end{equation}
			(v) The derivatives of  $ \underline{\bfSigma}(F) $,  and $ \underline{\bfSigma}(F)^{1/2} $, and $ \underline{\bfSigma}(F)^{-1/2} $ are given by embedding those of $ \bfSigma(F), \bfSigma(F)^{1/2} $ and $ \bfSigma(F)^{-1/2} $ using the map $ \imath $. This means, these functionals are Hadamard differentiable with derivatives given by
			\begin{align*}
				\underline{\bfSigma}'_F( \Delta ) &= \imath \bfSigma'_F( \Delta ), \\
				(\underline{\bfSigma}^{1/2})'_F( \Delta ) &= \imath D \bfSigma(F)^{1/2} \vecop \imath \bfSigma'_F( \Delta ), \\
				(\underline{\bfSigma}^{-1/2})'_F( \Delta ) &= \imath D \bfSigma(F)^{-1/2} \vecop \imath \bfSigma'_F( \Delta ).
			\end{align*} 
		\end{lemma}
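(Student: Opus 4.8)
The plan is to assemble the lemma from the Hadamard differentiability of a handful of truncated-moment functionals together with the chain rule, the product rule, and a little matrix calculus: parts (i)--(ii) are the analytic core, (iii)--(iv) the matrix core, and (v) is a bookkeeping consequence of the embedding $\imath$ introduced just before the lemma. For (i) and (ii), first I would record that $\bfmu(F)=\int_{-\bftau}^{\bftau}u\,dF_\vecX(u)$ and, componentwise, $g(F):=\int_{-\bftau}^{\bftau}uu^\top\,dF_\vecX(u)$ are Stieltjes integrals of bounded integrands over the fixed compact box $[-\bftau,\bftau]$. Integrating by parts in each of the $q'$ coordinates rewrites each of them as a finite linear combination of corner evaluations of $F_\vecX$ on $[-\bftau,\bftau]$ plus a signed Lebesgue integral $\int_{[-\bftau,\bftau]}F_\vecX(u)\,du$; all of these are bounded linear functionals of $F$ in the supremum norm, hence continuously Fr\'echet differentiable and therefore Hadamard differentiable tangentially to $\calD_{\calF,0}$ by Lemma~\ref{propHadamard}, with derivative the same linear functional evaluated at $\Delta$, which, undoing the integration by parts, is exactly $\int_{-\bftau}^{\bftau}u\,d\Delta(u)$, resp.\ $\int_{-\bftau}^{\bftau}uu^\top\,d\Delta(u)$. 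Writing $\bfSigma(F)=g(F)-\bfmu(F)\bfmu(F)^\top$ and using that matrix multiplication is a bounded bilinear map, the product rule for Hadamard differentiability gives
\[
\bfSigma'_F(\Delta)=g'_F(\Delta)-\bfmu'_F(\Delta)\,\bfmu(F)^\top-\bfmu(F)\,\bfmu'_F(\Delta)^\top,
\]
which is the stated formula. The claims for $\underline\bfmu(F)$ (and later $\underline\bfSigma(F)$) are immediate since these are obtained from $\bfmu(F)$ (resp.\ $\bfSigma(F)$) by the fixed linear zero/identity padding, whose differential is itself.

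\textbf{Parts (iii) and (iv).} The key is to differentiate $\matA\mapsto\matA^{1/2}$ on the open cone $\calS_2$ of positive-definite symmetric matrices; for $F\in\calF_2$ the covariance $\bfSigma(F)$ lies there. Perturbing $\matA\mapsto\matA+\bfDelta$ and writing $\matB=\matA^{1/2}$, $(\matA+\bfDelta)^{1/2}=\matB+\bm E+o(\|\bfDelta\|)$, squaring and dropping second-order terms forces the Sylvester equation $\matB\bm E+\bm E\matB=\bfDelta$. Vectorising with Lemma~\ref{Hilfslemma}(i) gives $(\matid\otimes\matB+\matB^\top\otimes\matid)\vecop\bm E=\vecop\bfDelta$, and positive-definiteness of $\matB$ makes the coefficient operator invertible (its eigenvalues are the pairwise sums of the eigenvalues of $\matB$); solving as in Lemma~\ref{Hilfslemma}(ii) yields \eqref{DefDifferentialSigmaRoot}, and continuity of that solution in $\matA$ shows $\matA\mapsto\matA^{1/2}$ is $C^1$ on $\calS_2$. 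For $\matA^{-1/2}=(\matA^{1/2})^{-1}$ I would compose with the elementary differential of inversion, $\vecop(\matB+\bm E)^{-1}-\vecop\matB^{-1}=-(\matB^{-\top}\otimes\matB^{-1})\vecop\bm E+o(\|\bm E\|)$, using $\matB^{-\top}=\matB^{-1}=\matA^{-1/2}$, to obtain \eqref{DefDifferentialSigmaRootInverse}. Finally $F\mapsto\bfSigma(F)^{\pm 1/2}$ is the composition of $F\mapsto\bfSigma(F)$ (Hadamard differentiable by (ii), valued in $\calS_2$ for $F\in\calF_2$) with these $C^1$ maps; since the outer maps act between finite-dimensional spaces they are Hadamard differentiable tangentially to everything, so the chain rule applies and gives \eqref{DefDifferentialS} and \eqref{DefDifferentialInverseRootS}.

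\textbf{Part (v) and the main obstacle.} For the underlined versions I would invoke the embedding principle stated just before the lemma: $\underline\bfSigma(F)$ is block-diagonal with upper-left block $\bfSigma(F)$ and lower-right block $\matid_q$, and the (inverse) square root of a positive-definite block-diagonal matrix is block-diagonal with the (inverse) square roots of the blocks, so $\underline\bfSigma(F)^{\pm 1/2}$ equals $\bfSigma(F)^{\pm 1/2}$ with the same zero/$\matid_q$ padding; its differential is therefore the differential from (ii)--(iv) pushed forward through the fixed linear embedding, i.e.\ premultiplied by $\imath$, which is the asserted formula. The step I expect to be the main obstacle is part (iii): one must upgrade the informal ``square and linearise'' heuristic for $\matA\mapsto\matA^{1/2}$ to genuine (not merely directional) differentiability, identify the differential with the inverse of the Sylvester operator, verify that this operator is invertible and varies continuously over $\calS_2$, and then chain it correctly with the Hadamard-differentiable functional $F\mapsto\bfSigma(F)$. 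Everything else is routine chain- and product-rule bookkeeping on top of the integration-by-parts representations of the truncated moments.
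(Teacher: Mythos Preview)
Your proposal is correct and follows essentially the same route as the paper: for (i)--(ii) the paper likewise reduces to the truncated moment functionals (citing \cite{VaartWellner2023} for (i) and writing out the product-rule difference quotient for $\bfmu(F)\bfmu(F)^\top$ in (ii)), and for (iii)--(iv) it derives the same Sylvester equation from $\matA^{1/2}\matA^{1/2}=\matA$, solves it via Lemma~\ref{Hilfslemma}, and composes with the differential of matrix inversion. Your treatment is in fact slightly more careful than the paper's at the point you flag as the main obstacle---the paper simply invokes ``the chain rule for the matrix derivative'' to obtain the Sylvester relation without separately justifying that $\matA\mapsto\matA^{1/2}$ is $C^1$ or that the Sylvester operator is invertible---so your explicit eigenvalue argument for invertibility on $\calS_2$ is a welcome addition rather than a deviation.
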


		\begin{proof}[Proof of Lemma~\ref{LemmaDerivs}] For (i) see \cite{VaartWellner2023}. To prove (ii) note that $ \bfSigma(F) = \int_{-\tau}^\tau  \vecu  \vecu^\top \, d F(\vecu) - \bfmu(F) \bfmu(F)^\top $.  Clearly, the Hadamard derivative of the first term at $ F \in \calF_2 $ is given by $ \int_{-\tau}^\tau  \vecu \vecu^\top \, d \Delta $ for any direction $ \Delta \in \calD_{\calF_2,0}' $. Indeed, for each pair $i,j \in \{1, \ldots, q \} $, w.l.o.g. $ i < j $, the functions $ (u_i,u_j) \mapsto u_i u_j  \eins_{[-a,a]} $ have bounded total variation in the sense of Hardy and Krause and the functions $ (u_i,u_j) \mapsto \Delta_{ij}(u_i, u_j) = \Delta(u_1, \ldots, u_i, \ldots, u_j, \ldots, u_q) $ are continuous, such that the integral $ \int_{-\tau}^\tau u_i u_j \, d \Delta_{ij}(u_i, u_j)  $  exists in the Stieltjes sense via integration by parts, see \cite{Steland2025} for a comprehensive discussion. Further, for $ t > 0 $ and $ \Delta \in \calD_{\calF_2,0} $
			\begin{align*}
				& \frac{\bfmu(F+t\Delta)\bfmu(F+t\Delta)^\top  - \bfmu(F) \bfmu(F)^\top}{t} \\
				& \qquad = \frac{[\bfmu(F+t\Delta)-\bfmu(F)] \bfmu(F+t\Delta)^\top + \bfmu(F)[\bfmu(F+t\Delta)-\bfmu(F)]^\top}{t} \\
				& \qquad \to \mu'_F(\Delta) \bfmu(F)^\top + \bfmu(F) \mu'_F(\Delta)^\top
			\end{align*}
			as $ t \to 0 $, which shows the assertion in view of (i).
			
			To show (iii) note that for a symmetric square matrix $ \matA $ we have the differential 
			\[ D \matA^{1/2} = ( \matid_n \otimes \matA^{1/2} + \matA^{\top/2}  \otimes \matid_n )^{-1},
			\] 
			i.e.,
			\[
			\vecop (\matA + \bfDelta )^{1/2} - \vecop \matA^{1/2} =  ( \matid_n \otimes \matA^{1/2} + \matA^{\top/2}  \otimes \matid_n )^{-1} \vecop \bfDelta + o( \| \bfDelta \| ).
			\]
			This result is less known and thus deserves a  proof. Clearly, $ \matA^{1/2} \matA^{1/2} = \matA $, and the chain rule for the matrix derivative leads to the equation
			\[
			D \matA^{1/2} \matA^{1/2} + \matA^{1/2} D \matA^{1/2} = D \matA = \matid_{n^2}.
			\]
			Next, by Lemma~\ref{Hilfslemma}~(ii) with $ \matC = D \matA^{1/2} $ and $ \matD = \matA^{1/2}$,
			\[
			D \matA^{1/2} = \vecop^{-1}\left(  (\matid_n \otimes \matA^{1/2} + \matA^{\top/2}  \otimes \matid_n )^{-1} \vecop \matid_N \right).
			\]	
			Now, the second assertion follows easily from the above results by the chain rule of Hadamard differentiability applied to the composition $ F \mapsto \bfSigma(F) \mapsto \bfSigma^{1/2}(F) $.
			
			The proof of (iv) goes along the same lines. It is well known that the differential of matrix inversion of a symmetric matrix is given by $ D \matA^{-1} = - (\matA^{-1} \otimes \matA^{-1} ) $, i.e., 
			\[
			\vecop (\matA + \bfDelta )^{-1} - \vecop \matA^{-1} = - (\matA^{-1} \otimes \matA^{-1} )  \vecop \bfDelta + o( \| \bfDelta \| ),
			\]
			see, e.g., \cite{MagnusNeudecker1999}. By the chain rule, since the differential of inversion taken at $ \matA^{1/2} $ is $ -(\matA^{-1/2} \otimes \matA^{-1/2} ) $, 
			\begin{align*} 
				D \matA^{-1/2} &= D (\matA^{1/2})^{-1} = - (\matA^{-1/2} \otimes \matA^{-1/2} ) D \matA^{1/2} \\
				& = - (\matA^{-1/2} \otimes \matA^{-1/2} ) \vecop^{-1}\left(  (\matid_n \otimes \matA^{1/2} + \matA^{\top/2}  \otimes \matid_n )^{-1} \vecop \matid_N \right).
			\end{align*}
			Therefore,
			\[
			D \bfSigma(F)^{-1/2} = - (\bfSigma^{-1/2}(F) \otimes \bfSigma^{-1/2}(F) ) \vecop^{-1}\left(  (\matid_n \otimes \matA^{1/2} + \matA^{\top/2}  \otimes \matid_n )^{-1} \vecop \matid_N \right).
			\]
			The second assertion again follows from the chain rule. 
		\end{proof}
		
		\begin{proof}[Proof of Theorem~\ref{ThResidualProcess}]
			Decompose the map $ \underline{H}(F) $ as $  \underline{H}(F) = F \circ \psi \circ \varphi (F) $. Here $ \varphi : \calF_2\to \R^{q'} \times \R^{q \times q}$ is defined by 
			\[
			\varphi(F) = \begin{pmatrix}  \bfmu(F) \\ \bfSigma(F) \end{pmatrix}, \qquad F \in \calF_2,
			\]
			with derivative $ \varphi'_F : \calD_{\calF_2,0} \to \R^{q'} \times \R^{q \times q} $ given by
			\[
			\varphi'_F( \Delta ) = \begin{pmatrix} \int_{-\bftau}^{\bftau} \vecu \, d \Delta(\vecu) \\   D \bfSigma(F)^{1/2} \vecop  \bfSigma'_F( \Delta ) \end{pmatrix}, \qquad \Delta \in \calD_{\calF_2,0},
			\]
			and $ \psi $ maps a pair $ (\vecm, \matS ) \in \R^{q'} \times \R^{q \times q}$ to the set $ \text{Aff}(\R^{q'+q}) $ of affine mappings from $ \R^{q'+q} $ to $ \R^{q'+q} $, where the affine transformation $\psi\begin{pmatrix} \vecm \\ \matS \end{pmatrix}$ is defined as
			\[
			\psi\begin{pmatrix} \vecm \\ \vecS \end{pmatrix}\begin{pmatrix} \vecx \\ \vecz \end{pmatrix} = \begin{pmatrix} \vecm + \vecS \vecx \\ \vecz \end{pmatrix}, 
			\qquad \begin{pmatrix} \vecx \\ \vecz \end{pmatrix} \in \R^{q'} \times \R^{q\times q}.
			\] 
			For any direction $ \Delta = ( \Delta_\vecm, \Delta_{\matS} ) \in \R^{q'} \times \R^{q \times q} $ the derivative $ \psi'_{(\vecm, \matS)}( \Delta_\vecm, \Delta_{\matS} ) \in \text{Aff}(\R^{q'+q})  $ is given by
			\[ 
			\psi'_{(\vecm, \matS)}( \Delta_\vecm, \Delta_{\matS} )\begin{pmatrix} \vecx \\ \vecz \end{pmatrix} =
			\begin{pmatrix} \Delta_{\vecm} + \Delta_{\matS} \vecx \\ \vecnull_q \end{pmatrix}, 
			\qquad \begin{pmatrix} \vecx \\ \vecz \end{pmatrix} \in \R^{q'} \times \R^{q\times q}.
			\]
			Thus, denoting the affine transformation by $ A(\Delta_{\vecm},\Delta_{\matS}) $, 
			\[
			\psi'_{(\vecm, \matS)}( \Delta_\vecm, \Delta_{\matS} )
			= \begin{pmatrix} A({\Delta_{\vecm},\Delta_{\matS}}) \\ \vecnull_q \end{pmatrix}, 
			\]
			Since for any c.d.f. $ Q $ we have $ Q'_F(\Delta) = \Delta $, the chain rule yields
			the derivative $ H'_F: \calD_{\calF_2,0} \to \calE $ 
			\begin{align*}
				\underline{H}'_F(\Delta)(\vecx, \vecz) & = \psi'_{\varphi(F)}( \varphi'_F( \Delta) ) \\
				& = A\left( \int_{-\bftau}^{\bftau} \vecu \, d \Delta( \vecu ), D \bfSigma(F)^{1/2} \vecop  \bfSigma'_F( \Delta ) \right).
			\end{align*} 
			Now, since
			\[
			\sqrt{n}( \hat{F}_n(\cdot) - F(\cdot) ) \Rightarrow \calB_F^0(\cdot), \qquad n \to \infty,
			\]
			using the functional delta method for Hadamard differentiable functions, we can conclude that
			the empirical residual process converges weakly to a Gaussian process. Precisely,
			\begin{align*}
				\sqrt{n}( \tilde{F}_{n}(\cdot) - F_{(\vecU,\vecZ)}( \cdot ) 
				& = \sqrt{n}( \underline{H}(\hat{F}_n)( \cdot ) - \underline{H}(F)( \cdot  ) \\
				& \Rightarrow \underline{H}'_F( \calB_F^0 )( \cdot ),
			\end{align*}
			as $n \to \infty $.
		\end{proof}
		
		\begin{proof}[Proof of Theorem~\ref{Th_CLT}]
			We have the representations
			\begin{align*}
				\tilde{c}_{\text{prop}}( z_k ) &= T_k \circ \underline{H}( \hat{F}_n ),\\
				{c}_{\text{prop}}( z_k ) &= T_k \circ \underline{H}( F ),
			\end{align*}
			for $ 1 \le k \le K $. Since 
			\[
			\sqrt{n}( \hat{F}_n - F ) \Rightarrow \calB^0( F ), \qquad n \to \infty,
			\]
			see, e.g., \cite{Kosorok2008}, the functional delta method entails
			\begin{align*}
				\left( \sqrt{n}( \tilde c_{\text{prop}}(z_k ) - c_{\text{prop}}( z_k ) ) \right)_{k=1}^K & = \sqrt{n}( T \circ \underline{H}( \hat F_n ) -  T \circ \underline{H}( F_n )  ) \\
				& \Rightarrow ( T'_{k,\underline{H}(F)}( \underline{H}_F'(\calB^0(F) ) ))_{k=1}^K, \\
				& = ( T'_{k, \underline{H}(F)}( \calB^0_{\bfmu(F),\bfSigma(F)}) )_{k=1}^K 
			\end{align*}
			as $ n \to \infty $. 
		\end{proof}
		
		\begin{proof}[Proof of Theorem~\ref{Th_BTCLT}]
			We have the representations
			\begin{align*}
				\tilde{c}_{\text{prop}}( z_k ) &= T_k \circ \underline{H}( \hat{F}_n ),\\
				\tilde{c}_{\text{prop}}^*( z_k ) &= T_k \circ \underline{H}( \hat{F}_n^* ),
			\end{align*}
			for $ 1 \le k \le K $. Since the bootstrap empirical process satisfies 
			\[
			\sqrt{n}( \hat{F}_n^* - \hat{F}_n ) \Rightarrow \calB^0( F ), \qquad n \to \infty,
			\]
			$P$-outer almost surely, see, e.g., \cite{Kosorok2008}, we can conclude, by virtue of the functional delta method that 
			\begin{align*}
				\left( \sqrt{n}( \tilde c_{\text{prop}}^*(z_k ) - \tilde c_{\text{prop}}( z_k ) ) \right)_{k=1}^K & = \sqrt{n}( T \circ \underline{H}( \hat F_n^* ) -  T \circ \underline{H}( \hat F_n )  ) \\
				& \Rightarrow ( T'_{k,\underline{H}(F)}( \underline{H}_F'(\calB^0(F) ) ))_{k=1}^K, \\
				& = ( T'_{k, \underline{H}(F)}( \calB^0_{\bfmu(F),\bfSigma(F)}) )_{k=1}^K 
			\end{align*}
			as $ n \to \infty $. 
		\end{proof}



\bibliographystyle{chicago}      
\bibliography{lit}   


\end{document}